\newtheorem{theorem}{Theorem}[section]
\newtheorem{lemma}[theorem]{Lemma}
\newtheorem{proposition}[theorem]{Proposition}
\newtheorem{corollary}[theorem]{Corollary}
\newtheorem*{conjecture*}{Conjecture}
\theoremstyle{definition}
\newtheorem{problem}[theorem]{Open Problem}
\theoremstyle{remark}
\newtheorem{remark}[theorem]{Remark}
\newtheorem{definition}[theorem]{Definition}
\newtheorem{example}[theorem]{Example}
\numberwithin{equation}{section}
\numberwithin{figure}{section}
\newcommand{\R}{\mathbb{R}}
\newcommand{\dx}{{\mathrm d}x}
\newcommand{\dy}{{\mathrm d}y}
\renewcommand{\d}{{\mathrm d}}
\DeclareMathOperator{\dist}{dist}
\DeclareMathOperator{\diameter}{diam}
\newcommand\Graph{\mathcal{G}}
\newcommand\HGraph{\mathcal{H}}
 \newcommand\mV{\mathsf{V}}
 \newcommand\mE{\mathsf{E}}
 \newcommand\mv{\mathsf{v}}
\title{Mean distance on metric graphs}
\subjclass[2020]{05C12, 30L15, 51K05, 54E45, 81Q35}
\keywords{Metric graph, mean distance, graph surgery, Laplacian, spectral gap, coarea formula.}
\author{Lu\'is N.\ Baptista}
\author{James B.\ Kennedy}
\author{Delio Mugnolo}
\address{Departamento de Matem\'atica, Faculdade de Ci\^encias da Universidade de Lisboa,
Campo Grande, Edif\'icio C6, 1749-016 Lisboa, Portugal {\rm and} Grupo de F\'isica Matem\'atica, Instituto Superior T\'ecnico, Av.\ Rovisco Pais, 1049-001 Lisboa, Portugal}
\email{lcbaptista@ciencias.ulisboa.pt}
\address{Departamento de Matem\'atica, Faculdade de Ci\^encias da Universidade de Lisboa,
Campo Grande, Edif\'icio C6, 1749-016 Lisboa, Portugal {\rm and} Grupo de F\'isica Matem\'atica, Instituto Superior T\'ecnico, Av.\ Rovisco Pais, 1049-001 Lisboa, Portugal}
\email{jbkennedy@ciencias.ulisboa.pt}
\address{Lehrgebiet Analysis, Fakult{\"a}t Mathematik und Informatik, FernUniversit{\"a}t in Hagen, D-58084 Hagen, Germany}
\email{delio.mugnolo@fernuni-hagen.de}
\date{\today}
\thanks{L.N.B. and J.B.K. were supported by the Funda\c{c}\~ao para a Ci\^encia e a Tecnologia, Portugal, reference  PTDC/MAT-PUR/1788/2020 (J.B.K.) and UIDB/00208/2020 (both authors). D.M.\ was partially supported by the Deutsche Forschungsgemeinschaft (Grant 397230547).  This article is based upon work from COST Action 18232 MAT-DYN-NET, supported by COST (European Cooperation in Science and Technology), \url{www.cost.eu}.\\
The authors are very grateful to Pavel Kurasov (Stockholm) for participating in the early discussions from which this article developed, { and to Noah Kravitz (Princeton) for a number of very helpful observations and suggestions, including the idea behind Proposition~\ref{prop:pendant-derivative}.}}
\begin{document}

\begin{abstract}
We introduce a natural notion of mean (or average) distance in the context of compact metric graphs, and study its relation to geometric properties of the graph. We show that it exhibits a striking number of parallels to the reciprocal of the spectral gap of the graph Laplacian with standard vertex conditions: it is maximised among all graphs of fixed length by the path graph (interval), or by the loop in the restricted class of doubly connected graphs, and it is minimised among all graphs of fixed length and number of edges by the equilateral flower graph. We also establish bounds for the correctly scaled product of the spectral gap and the square of the mean distance which depend only on combinatorial, and not metric, features of the graph. This raises the open question whether this product admits absolute upper and lower bounds valid on all compact metric graphs.
\end{abstract}

\maketitle

\section{Introduction}
\label{sec:introduction}

Our aim is to study the notion of mean, or average, distance on metric graphs. Metric graphs have been studied in mathematics for over 40 years, at least since~\cite{Lum80}. They consist of collections of intervals whose endpoints are suitably identified to create a connected network structure, and display many features which mirror those of combinatorial, i.e.\ discrete, graphs; but at times they exhibit more complex behaviour reminiscent of (higher dimensional) Euclidean domains or manifolds, thanks to their continuous metric structure. In particular, it is possible to define differential operators such as Laplacians on metric graphs, see~\cite{BerKuc13,Kur23,Mug14}; in the last decade or so, the interplay between the geometry of the graph and the Laplacian spectrum has received considerable attention, see, e.g., \cite{BerKenKur19,Kur23} and the references therein.

At the same time, mean distance does not previously seem to have been considered on metric graphs. On \emph{combinatorial} graphs a corresponding notion was already actively investigated in the late 20th Century; in fact, it seems difficult to determine a precise birthdate of the theory, as there the mean distance is, up to a normalising factor, simply the $\ell^1$-norm of the distance matrix. According to \cite{EntJacSny76}, this and related quantities were first studied by Harary~\cite{Har59} and Ore~\cite{Ore62} as graph-based sociometric quantities; while in \cite{GodOel11} the authors trace the origin of this notion back to an even earlier study in quantum chemistry~\cite{Wie47}. Subsequently, mean distance has grown to be an important quantity in combinatorial geometry ever since \cite{DoyGra77,EntJacSny76,Ple84}, and it has also played an important role in inverse combinatorial problems~\cite{Chu88}. {The interplay of the $\ell^1$-norm of the distance matrix with the geometry and potential theory of graphs was investigated more recently, in \cite{Ste23}; in particular, a new notion of curvature is suggested that, in particular, agrees with the mean distance function whenever the latter is vertex-wise constant. As a further development that is closer to our main interest in this article, let us mention that o}ne topic of interest since at least the 1970s has been to study the relationship between the combinatorial mean distance and the eigenvalues of the discrete graph Laplacian, see, e.g., \cite{GodOel11,Moh91b}, and see below.

Our goals here are essentially twofold. First, we will develop fundamental geometric bounds for the mean distance of a (compact) metric graph: on the one hand, it seems that the mean distance is another, and arguably quite fine, quantity that measures how well connected the graph is, just like the Cheeger constant and the vertex connectivity do (as our estimates will show). On the other, in practice it is somewhat difficult to compute, thus providing a need for such estimates. What we will show (see Theorem~\ref{thm:main}) is that the mean distance is maximised among all graphs of given total length when the graph is a path, i.e.\ interval (or a loop among doubly connected graphs); and maximised among all graphs of given total length and number of edges when the graph is a so-called equilateral flower graph.

These results have strong parallels with both the behaviour of mean distance on combinatorial graphs, and the spectral gap $\mu_2$ of the standard Laplacian on metric graphs. On combinatorial graphs it has been known for decades that mean distance is minimised on complete graphs \cite[Theorem~2.3]{EntJacSny76} and maximised on path graphs \cite[Corollary~1.3]{DoyGra77}, or on cycles among doubly-connected graphs \cite[Theorem~6]{Ple84}. This in turn mirrors inversely what is known regarding the spectral gap of the combinatorial Laplacian, that is, the algebraic connectivity: it is famously minimised by path graphs and maximised by complete graphs, see \cite{Fiedler}. This strong inverse relationship in the discrete world has been studied: for example, in~\cite{Moh91b}, quite sophisticated estimates on the lowest positive eigenvalue of the discrete Laplacian are derived in terms of the mean distance in combination with the  size of the graph 
% (i.e., the number of its vertices)
and its maximal degree (see also \cite{Moh91c}); as a more recent contribution to this field we mention \cite{Siv09}.

Regarding the spectral gap $\mu_2$ of the standard Laplacian on metric graphs, it is known that $\mu_2$ is minimised on the path graph \cite[Th\'eor\`eme 3.1]{Nic87}, or the cycle among doubly-connected graphs \cite[Theorem~2.1]{BanLev17}, and maximised, under a constraint on the number of edges, among others by the equilateral flower graph \cite[Theorem~4.2]{KenKurMal16}. Thus, in particular, the strong correspondence in the discrete setting also seems to hold in the metric one.

Our second goal is thus to investigate precisely the relationship between the mean distance $\rho$ and the spectral gap $\mu_2$. We will provide several upper and lower bounds on the product $\mu_2 \rho^2$ { (scaled in such a way as to be independent of the total length of the graph)}, see Theorem~\ref{thm:direct-lower-bound} and Corollaries~\ref{cor:comparison} and~\ref{cor:mu-rho-lower-bound}, which depend only on the number of edges of the graph and/or its (first) Betti number (number of independent cycles); in the case of trees one can find an absolute constant as an upper bound, $\pi^2$. These bounds in general do not appear to be optimal, and the natural question arises as to whether the product $\mu_2 \rho^2$ admits absolute upper and lower bounds valid for \emph{any} compact graph, which we leave as an open problem (Open Problem~\ref{problem:absolute}).

This paper is structured as follows. In Section~\ref{sec:notation} we will introduce our general framework and provide a definition of mean distance $\rho$ on a metric graph. All our main results -- geometric estimates for $\rho$ as well as bounds linking $\rho$ and the spectral gap of the standard Laplacian -- are collected in Section~\ref{sec:main}. In addition to Theorem~\ref{thm:main}, we also include a statement on the relationship between $\rho$ and the diameter of the graph, Theorem~\ref{thm:diameter}.

The proofs, which rely on various techniques, are spread throughout the remaining sections: first, we give several explicit examples in Section~\ref{sec:examples}, namely star graphs (including intervals, see Example~\ref{ex:star}), a special type of ``firework graph'' which shows that mean distance can be arbitrarily close to diameter (Example~\ref{ex:firework}), and flower graphs (including loops, Example~\ref{ex:flower}), whose analysis is essential for one of the main geometric bounds in Theorem~\ref{thm:main}.

Then, in Section~\ref{sec:surgery}, we study \emph{surgery principles} for mean distance, which will be used to prove Theorem~\ref{thm:main}. These principles are inspired by analogous ones developed in recent years for spectral and torsional quantities on metric graphs (see in particular \cite{BerKenKur19}), but, curiously, such surgical methods also play a role in some of the early combinatorial works; we refer to~\cite[Sections~5.3 and 5.4]{GodOel11} for an overview of results in this direction. However, while there are parallels to surgery for spectral quantities, in concrete terms and in the details our results and proofs are quite different as there is currently no known variational characterisation of mean distance (cf.\ Remark~\ref{rem:no-variation}); instead, one needs to analyse more directly the effect of altering a graph on its distance function. { There are also notable differences in some results; attaching a so-called pendant edge to a graph has an indeterminate effect on $\rho$, despite always lowering $\mu_2$ (see Proposition~\ref{prop:pendant-derivative}).}

In Section~\ref{sec:symmetrisation} we will provide the proof that among doubly connected graphs of given length the mean distance is maximised on the loop, the last remaining part of Theorem~\ref{thm:main}. Here, somewhat surprisingly, it is natural to use a symmetrisation argument based on the coarea formula and inspired by the approach of Friedlander for minimising standard Laplacian eigenvalues \cite{Fri05}; however, here again, the actual technical nature of the proof is very different as mean distance is not characterised as a minimising quantity.

Finally, in Section~\ref{sec:test}, we give a proof of the lower bound in Theorem~\ref{thm:direct-lower-bound} on the product of $\rho$ and the spectral gap using variational methods for the latter.

Before continuing, we mention the very recent paper~\cite{GarMarSil23}, which we became aware of shortly before submitting the present paper: seemingly unaware of the theory of metric graphs, the authors of~\cite{GarMarSil23} introduce a notion of mean distance that agrees with ours and discuss the complexity of its computing.

We note that the notion of mean distance is in fact a natural one in any \textit{metric measure space} of finite diameter and volume. In the spirit of~\cite{Dan12}, it would be natural to introduce the mean distance of further, possibly more general metric measure spaces: an interesting estimate on the lowest positive eigenvalue of the Laplace--Beltrami operator of a compact Riemannian manifold was already announced, but not proved, in~\cite{Moh89}. More modestly, one could try and extend the scope of our results to metric graphs of finite total length but an infinite number of edges. Since many of our methods are most natural in the compact case of finitely many edges, and tailored to graphs, to reduce technical complications and keep the work self-contained we will restrict to this case and not consider such possible generalisations here.

\section{Notation and assumptions}
\label{sec:notation}

Throughout, we consider metric graphs $\Graph:=(\mV,\mE,\ell)$ consisting of a finite number $V:=\#\mV$ of vertices and $E:=\#\mE$ of edges; we denote by $\beta = E - V + 1$ the (first) Betti number of $\Graph$, the number of independent cycles it contains. 
%: we denote the vertex and edge set by $\mathcal V$ and $\mathcal E$, respectively.

Unlike in the case of combinatorial graphs, we create a metric structure by associating each edge $e$ with a real interval $(0,\ell_e)$ of length $\ell_e>0$, which we may regard as a parametrisation of the edge. While this parametrisation implies an orientation of each edge, all quantities considered will be independent of these orientations. 
%; it is sometimes convenient to consider the set of such metric edges,
%\[
%\mathcal E:=\bigsqcup_{e\in\mE} (0,\ell_e)
%\]
In particular, the \textit{total length} $|\Graph|$ of $\Graph$ is, by definition
\[
|\Graph|:=\sum_{e\in\mE} \ell_e.
\]
A distance on $\Graph$ is introduced by first considering the Euclidean distance on each metric edge and then extending it to a (pseudo-)metric $\dist$ on $\Graph$ by the usual shortest-path construction (we will also sometimes write $\dist_\Graph$ in those cases where it becomes necessary to specify the graph).

To avoid trivialities, \textit{we will always assume without further comment that the metric graph is connected}. A metric graph then canonically becomes a metric measure space $(\Graph,\dist,\dx)$ upon endowing each metric edge with the 1-dimensional Lebesgue measure $\dx$. This metric measure structure immediately induces the spaces $C(\Graph)$ and $L^2(\Graph)$ of continuous and square integrable functions over $\Graph$, respectively. We will also need the Sobolev space
\[
H^1(\Graph):=\{f\in C(\Graph)\cap L^2(\Graph):f'\in L^2(\Graph)\}.
\]
As is well known, all these spaces are, up to a canonical identification, independent of the choice of parametrisation of the edges, and in particular of the corresponding orientation.

Our assumptions, namely that there are only finitely many edges each of finite length, imply that as a metric space $\Graph$ is compact and in particular has finite diameter. In this case, as is standard, we will refer to $\Graph$ as a \emph{compact metric graph}. (We refer to~\cite{BerKuc13,Mug19} for a more detailed introduction to the notion of metric graphs.)

Our compactness assumption on $\Graph$ implies that the following natural notion of mean distance is well defined.

\begin{definition}\label{defi:meandist}
The \textit{mean distance function} on $\Graph$ is the function $\rho_\Graph : \Graph \to \R$ defined by
\[
%\rho_\Graph(x):=\frac{1}{\mu(\Graph\setminus\{x\})}\int_{\Graph\setminus\{x\}}\dist(x,y) \dy.
\rho_\Graph(x):=\frac{1}{|\Graph|}\int_{\Graph}\dist(x,y)\, \dy;
\]
we will call $\rho_\Graph(x)$ the \emph{mean distance from $x$} on $\Graph$. The \textit{mean distance} on $\Graph$ is then defined as the mean value of $\rho_\Graph$,
\[
\rho(\Graph):=\frac{1}{|\Graph|}\int_{\Graph}\rho_\Graph(x)\, \dx =\frac{1}{|\Graph|^2}\int_{\Graph}\int_{\Graph}\dist(x,y)\,\dy\,\dx .
\]
\end{definition}

(As mentioned in the introduction, this concept has been studied for decades in graph theory, where it is commonly referred to as either ``mean distance'' or ``average distance'' in the literature; we will always use the former.)

It would be possible to extend this definition to graphs of finite total length but an infinite number of edges; however, as many of our techniques are naturally adapted to the compact case, and to keep the exposition more simple, we will not do so.

Beyond estimates on the mean distance of a graph $\Graph$ in terms of quantities such as the total length, the number of edges and the Betti number of the graph, we will be particularly interested in the interplay between the mean distance and the spectrum of the corresponding Laplacian $-\Delta_\Graph$ with standard vertex conditions, in particular as regards its \textit{spectral gap} (see, e.g., \cite{BerKenKur19,BerKuc13}). For our purposes the following characterisation will suffice.

\begin{definition}
The \textit{spectral gap} of $\Graph$ is
\begin{equation}\label{eq:spectral-rayleigh}
\mu_2(\Graph):=\inf_{\substack{{u\in H^1(\Graph)}\\{\int_\Graph u(x)\dx =0}}}\frac{\|u'\|^2_{L^2(\Graph)}}{\|u\|^2_{L^2(\Graph)}}.
\end{equation}
\end{definition}
This quantity is known to be strictly positive since $\Graph$ is compact and connected.

\section{Main results}
\label{sec:main}

We start with a simple observation on the relationship between $\rho(\Graph)$ and the diameter $\diameter(\Graph)$; completely analogous results have been known to hold for discrete graphs since the 1970s (see \cite{DoyGra77}). While the (strict) inequality is rather obvious, the key point is that the inequality is sharp nonetheless.

\begin{theorem}
\label{thm:diameter}
Let $\Graph$ be a compact metric graph of diameter $\diameter (\Graph) > 0$. Then
\begin{equation}
\label{eq:rho-diameter}
    \rho (\Graph) < \diameter (\Graph).
\end{equation}
Moreover, there exists a sequence of graphs $\Graph_n$ for which $\frac{\rho (\Graph_n)}{\diameter (\Graph_n)} \to 1$ as $n \to \infty$.
\end{theorem}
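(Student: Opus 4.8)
The plan is to prove the strict inequality $\rho(\Graph) < \diameter(\Graph)$ and then construct an explicit sequence of graphs saturating the ratio. For the inequality, I would argue directly from the definition. By definition, $\dist(x,y) \le \diameter(\Graph)$ for all $x,y \in \Graph$, so integrating twice and dividing by $|\Graph|^2$ immediately gives $\rho(\Graph) \le \diameter(\Graph)$. The point is to get \emph{strict} inequality. Since $\Graph$ is a connected compact metric graph with $\diameter(\Graph) > 0$, the set of pairs $(x,y)$ with $\dist(x,y)$ close to $\diameter(\Graph)$ cannot be all of $\Graph \times \Graph$: indeed, $\dist(x,x) = 0$ for every $x$, and more generally $\dist$ is continuous on $\Graph \times \Graph$, so the set where $\dist(x,y) < \diameter(\Graph) - \varepsilon$ has positive two-dimensional measure for suitable small $\varepsilon > 0$. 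Hence the double integral is strictly less than $\diameter(\Graph)\,|\Graph|^2$, giving~\eqref{eq:rho-diameter}.

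For the sharpness statement, the natural idea is to look for a sequence of graphs in which almost all pairs of points are almost maximally far apart. A clean candidate is a star graph with many short edges attached to one long edge, or better, the ``firework'' type construction the introduction advertises (Example~\ref{ex:firework}) precisely as showing that mean distance can be arbitrarily close to diameter. Concretely, I would take a long path (interval) of length close to the total length and attach to it a large number $n$ of very short pendant edges, or alternatively consider a graph consisting of two clusters of total mass concentrated near the two endpoints of a long edge. The heuristic is that if a proportion of the mass tending to $1$ sits within distance $o(1)$ of one of two antipodal points realising the diameter, then for a proportion of pairs $(x,y)$ tending to $1$ the distance $\dist(x,y)$ is within $o(1)$ of $\diameter(\Graph)$, forcing $\rho(\Graph_n)/\diameter(\Graph_n) \to 1$.

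The main obstacle is making the sharpness construction genuinely achieve the ratio $1$ rather than some constant strictly below it. A single long edge with mass near its endpoints only gives ratio close to $1$ if the mass near \emph{both} endpoints is large and the two clusters are separated by nearly the full diameter; one must check that the intra-cluster distances and the cluster-to-cluster distances balance correctly. I would therefore quantify the construction carefully: fix the diameter (say by rescaling so $\diameter(\Graph_n) = 1$) and arrange that a fraction $1 - \delta_n$ of the total length lies within distance $\delta_n$ of two fixed antipodal points, with $\delta_n \to 0$. Then split the double integral $\int\!\int \dist(x,y)$ over the four combinations of the two clusters and the remaining $O(\delta_n)$ mass, bound the cross-cluster contribution below by $(1 - O(\delta_n))$ and the rest trivially by zero, and conclude $\rho(\Graph_n) \ge 1 - O(\delta_n)$. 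Since this construction may be exactly the firework graph of Example~\ref{ex:firework}, I would expect the cleanest route is to invoke or adapt that example directly rather than rebuild the estimate from scratch.
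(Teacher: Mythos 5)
Your proof of the strict inequality \eqref{eq:rho-diameter} is correct and takes a different, more elementary route than the paper: the paper writes $\rho(\Graph)=\frac{1}{|\Graph|^2}\|\dist\|_1$ and gets strictness from Cauchy--Schwarz, which is strict because $\dist$ and $\mathbf 1$ are linearly independent, whereas you get it from continuity of $\dist$ together with the fact that the open set $\{(x,y):\dist(x,y)<\diameter(\Graph)-\varepsilon\}$ contains the diagonal and hence has positive measure in $\Graph\times\Graph$. Both arguments are sound; yours uses nothing beyond continuity and $\diameter(\Graph)>0$.

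The sharpness construction, however, has a genuine gap: two clusters can never give ratio tending to $1$. Suppose, as in your quantified construction, that $\diameter(\Graph_n)=1$ and fractions $a$ and $b$ of the total length lie within distance $\delta_n$ of two antipodal points, $a+b=1-\delta_n$. Pairs $(x,y)$ lying in the \emph{same} cluster have $\dist(x,y)\le 2\delta_n$, and such pairs make up a fraction $a^2+b^2\ge\tfrac12(a+b)^2$ of all pairs; only the cross-cluster pairs, a fraction $2ab\le\tfrac12$, can have distance close to $1$. Hence
\begin{equation*}
    \rho(\Graph_n)\;\le\; 2ab\cdot 1+(a^2+b^2)\cdot 2\delta_n+O(\delta_n)\;\le\;\tfrac12+O(\delta_n),
\end{equation*}
so your construction is capped at ratio $\tfrac12$ (attained for balanced clusters), and your heuristic ``a proportion of pairs tending to $1$ is at distance $1-o(1)$'' is false --- at least half of all pairs are at distance $o(1)$. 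The variant ``long path with many short pendant edges'' fares no better: pendants spread along the path give ratio near $\tfrac13$, pendants at the two endpoints reduce to the two-cluster case. Deferring to Example~\ref{ex:firework} does not repair this, because the firework graph is not a two-cluster graph and your analysis of it would be the (incorrect) one above.

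The missing idea is to use \emph{many} clusters that are pairwise separated by essentially the full diameter. The firework graph is a star of stars: an inner equilateral $m$-star with edges of length $J$, and at each of its $m$ tips a dense outer star with $n$ edges of length $j$, where $j\to0$ and $nj\to\infty$. Then the outer stars carry a mass fraction $\frac{nj}{J+nj}\to1$; any two points in \emph{distinct} outer stars are at distance at least $2J$, while $\diameter=2J+2j\to 2J$; and a pair of points lies in distinct outer stars with probability tending to $\frac{m-1}{m}$. This yields $\rho(\Graph)\ge 2J\cdot\frac{m-1}{m}\cdot\bigl(\tfrac{nj}{J+nj}\bigr)^2$, hence $\liminf \rho/\diameter\ge\frac{m-1}{m}$ for each fixed $m$, and a diagonal sequence over $m\to\infty$ finishes the proof. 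Your two-cluster construction is exactly the case $m=2$, for which $\frac{m-1}{m}=\tfrac12$ is the ceiling you hit.
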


%``Firework'' example: $\rho (\Graph_n) \to$ diameter. (Note there is a corresponding known example for discrete graphs in the old paper of Doyle--Graver \cite{DoyGra77}.)

\begin{proof}
For \eqref{eq:rho-diameter}, by definition of $\rho$ and Cauchy--Schwarz,
%\[
%\|\dist\|_1 ^2\le \|1\|^2_2 \|\dist \|^2_2=|\Graph|^2 \|\dist \|^2_2
%\]
%hence
\begin{equation}
\label{eq:holdbasic}
\rho(\Graph)=\frac{1}{|\Graph|^2}\|\dist\|_1< \frac{1}{|\Graph|}\|\dist\|_2\le\|\dist\|_\infty= \diameter(\Graph),
\end{equation}
the \textit{strict} inequality being due to the linear independence of $\dist$ and $\mathbf 1$. 
%The inequality \eqref{eq:rho-diameter} is a trivial consequence of the fact that $\dist (x,y) \leq \diameter (\Graph)$ for all $x,y \in \Graph$ and the definition of $\rho$ (see \eqref{eq:holdbasic}). 
For a sequence of graphs $\Graph_n$ for which $\frac{\rho (\Graph_n)}{\diameter (\Graph_n)} \to 1$ we will consider ``firework'' graphs similar to the (discrete) ones used in \cite{DoyGra77} to prove a corresponding assertion for the discrete mean distance; these will be given in Example~\ref{ex:firework} below.
\end{proof}

We now turn to our first main result, which consists of fundamental upper and lower bounds on $\rho$ in terms only of the length, and the length and number of edges, of the graph $\Graph$, respectively. As discussed in the introduction, these closely mirror those for the (inverse of the) first nontrivial standard Laplacian eigenvalue $\mu_2(\Graph)$: this eigenvalue admits a lower bound depending only on length \cite[Th\'eor\`eme 3.1]{Nic87}, with equality for path graphs (i.e. intervals); there is an improved lower bound for \emph{doubly connected graphs} depending only on length and with equality for loops \cite[Theorem~2.1]{BanLev17}. There is a corresponding upper bound based only on length and number of edges, with equality for equilateral flower graphs, among others \cite[Theorem 4.2]{KenKurMal16}. Our result is:

\begin{theorem}
\label{thm:main}
Let $\Graph$ be a compact metric graph with total length $|\Graph|=L>0$ and $E$ edges. Then
\begin{equation}
\label{eq:bounds-1}
	\left(\frac{2E-1}{4E^2}\right)L \leq \rho (\Graph) \leq \frac{L}{3},
\end{equation}
with equality in the lower bound if and only if $\Graph$ is an equilateral flower graph, and equality in the upper bound if and only if $\Graph$ is a path graph (i.e. interval). If $\Graph$ is doubly connected, then the upper bound may be improved to
\begin{equation}
\label{eq:bounds-2}
	\rho (\Graph) \leq \frac{L}{4},
\end{equation}
with equality if and only if $\Graph$ is a loop.
\end{theorem}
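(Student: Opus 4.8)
The plan is to establish each of the three assertions by deforming an arbitrary $\Graph$ into the relevant extremal graph through a length- and measure-preserving operation under which $\rho$ moves monotonically in the desired direction, and then to evaluate $\rho$ on that extremal graph directly.

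For the upper bound in \eqref{eq:bounds-1}, $\rho(\Graph)\le L/3$ with equality only for the interval, I would use the surgery principles of Section~\ref{sec:surgery} to show that the path maximises $\rho$. The elementary mechanism is that \emph{cutting through a vertex} $\mv$ --- splitting $\mv$ into two vertices and partitioning its incident edge-ends between them so that the graph stays connected --- never decreases any pairwise distance, since it only destroys possible shortcuts, while leaving $L$ and the edgewise Lebesgue measure untouched; identifying the two underlying point sets by the obvious measure-preserving bijection, $\dist$ can only grow, hence $\rho$ does not decrease. As each such cut lowers $\beta$ by one, finitely many cuts bring $\Graph$ to a spanning tree without decreasing $\rho$. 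It then remains to pass from an arbitrary tree to the path, for which I would use a second surgery that takes two subtrees meeting at a vertex of degree $\ge 3$ and concatenates them into one longer branch, again without decreasing $\rho$; iterating eliminates all branch vertices. On the interval $[0,L]$ one computes directly
\[
\rho=\frac{1}{L^2}\int_0^L\!\!\int_0^L|x-y|\,\dy\,\dx=\frac{L}{3},
\]
and tracking the (strict) monotonicity at each step yields equality exactly for the path.

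For the lower bound I would move $\rho$ downwards to the equilateral flower. Gluing all vertices of $\Graph$ to a single point $o$ is a measure-preserving quotient that can only shorten distances, so $\rho$ does not increase, and the outcome is a bouquet of $E$ petals of lengths $\ell_1,\dots,\ell_E$ with $\sum_e \ell_e=L$. By the computation of Example~\ref{ex:flower}, such a flower has
\[
\rho=\frac{2Lp_2-p_3}{4L^2},\qquad p_2=\sum_e\ell_e^2,\quad p_3=\sum_e\ell_e^3 .
\]
Minimising over the petal lengths, one fixes the sum $\sigma=\ell_i+\ell_j$ of any two petals and observes that the relevant derivative has the sign of $(2\ell_i-\sigma)(4L-3\sigma)$; since $\sigma\le L$ forces $4L-3\sigma>0$, equalising any unequal pair strictly decreases $\rho$. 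Hence the minimiser is the equilateral flower $\ell_e\equiv L/E$, for which $\rho=\bigl(\tfrac{2E-1}{4E^2}\bigr)L$, and equality in \eqref{eq:bounds-1} forces $\Graph$ to already be a bouquet (so that no distance strictly dropped under gluing) and to be equilateral, i.e.\ an equilateral flower graph.

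For the doubly connected bound \eqref{eq:bounds-2} I would follow the symmetrisation of Section~\ref{sec:symmetrisation}. Fix $x$ and set $f=\dist(x,\cdot)$, so that $|f'|=1$ almost everywhere; the coarea formula gives
\[
L=\int_0^{R(x)}N(x,t)\,\d t,\qquad \int_\Graph\dist(x,\cdot)\,\dy=\int_0^{R(x)}t\,N(x,t)\,\d t,
\]
where $R(x)=\max_y\dist(x,y)$ and $N(x,t)=\#\{y:\dist(x,y)=t\}$. Double connectivity should guarantee $N(x,t)\ge2$ for $t<R(x)$ --- given a farthest point $z$, Menger's theorem provides two disjoint $x$--$z$ paths, each crossing the level $t$ --- and since this forces $L\ge 2R(x)$, maximising $\int_0^{R(x)} tN\,\d t$ subject to $\int_0^{R(x)} N\,\d t=L$ and $N\ge2$ yields $\int_\Graph\dist(x,\cdot)\,\dy\le L^2/4$, i.e.\ $\rho_\Graph(x)\le L/4$; integrating in $x$ gives \eqref{eq:bounds-2}, with equality forcing $N(x,t)\equiv2$ on $(0,L/2)$ for every $x$, which characterises the loop. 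I expect the hard part to be precisely this last step: the pointwise bound $N(x,t)\ge2$ is delicate at cut vertices, so the count must ultimately be used only in an averaged sense, and it is here that the coarea-based rearrangement onto the loop, in the spirit of Friedlander, does the real work --- both for the inequality and for pinning down the equality case.
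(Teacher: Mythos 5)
Your proposal is correct and follows essentially the same route as the paper: vertex cutting/gluing surgery plus petal-equalisation for the lower bound (your formula $\rho=\frac{2Lp_2-p_3}{4L^2}$ and the derivative sign $(2\ell_i-\sigma)(4L-3\sigma)$ match the paper's Lemma~\ref{lem:flower} exactly), cutting to a tree and then straightening it for the upper bound, and the coarea-based rearrangement with $N(x,t)\ge 2$ and $R(x)\le L/2$ for the doubly connected case. The only differences are presentational: the paper implements your ``concatenate two subtrees'' step concretely as unfolding pairs of adjacent pendant edges (Example~\ref{ex:unfolding}, justified by Theorem~\ref{thm:surgery}(2)), and its symmetrisation argument runs through mass balance and Lemma~\ref{lem:doubly-connected-distance} rather than invoking Menger's theorem explicitly --- though your explicit use of the level-set bound $N(x,t)\ge 2$ (valid for all but finitely many $t$, so the cut-vertex worry is harmless) is in fact the ingredient that makes the key inequality chain airtight.
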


For \eqref{eq:bounds-1} we will use \emph{surgery} methods, which are the subject of Section~\ref{sec:surgery}; the proof of \eqref{eq:bounds-1} is given at the end of that section. To prove \eqref{eq:bounds-2} we will use a symmetrisation-type argument based on the coarea formula (which, curiously, does not seem to work in the base case), see Section~\ref{sec:symmetrisation}.

Since $\rho$ and $\mu_2$ satisfy compatible bounds, we can also find two-sided bounds for the scale invariant quantity $\mu_2(\Graph) \rho(\Graph)^2$:

\begin{corollary}
\label{cor:comparison}
Let $\Graph$ be a compact metric graph with $E$
edges and first Betti number $\beta \geq 0$. Then
\begin{equation}
\label{eq:comparison}
		\frac{\pi^2(2E-1)^2}{16E^4}\leq \mu_2(\Graph) \rho(\Graph)^2 \leq \min \left\{ \frac{\pi^2 E^2}{9}, \pi^2 (1+\beta)^2\right\}
\end{equation}
\end{corollary}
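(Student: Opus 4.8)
The plan is to read off each of the three inequalities in~\eqref{eq:comparison} by pairing an estimate for $\rho(\Graph)$ from Theorem~\ref{thm:main} with a \emph{compatible} one-sided bound for the spectral gap $\mu_2(\Graph)$; since $\mu_2(\Graph)$ and $\rho(\Graph)^2$ are nonnegative, multiplying two matching bounds immediately produces a bound for the product, and the total length $L=|\Graph|$ cancels (as it must, $\mu_2\rho^2$ being scale invariant). The three spectral ingredients I would draw on are all available for the standard Laplacian: a length-based lower bound (\cite{Nic87}), an edge-number upper bound (\cite{KenKurMal16}), and a diameter-based upper bound.

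For the lower bound, Nicaise's estimate $\mu_2(\Graph)\ge \pi^2/L^2$ (\cite{Nic87}) combined with $\rho(\Graph)\ge \frac{2E-1}{4E^2}L$ from Theorem~\ref{thm:main} gives, after squaring the latter, $\mu_2(\Graph)\rho(\Graph)^2 \ge \frac{\pi^2}{L^2}\cdot\frac{(2E-1)^2}{16E^4}L^2 = \frac{\pi^2(2E-1)^2}{16E^4}$, which is exactly the left-hand side of~\eqref{eq:comparison}. For the first upper bound I would use the edge-based bound $\mu_2(\Graph)\le \pi^2E^2/L^2$ (\cite{KenKurMal16}, sharp on equilateral flowers) together with $\rho(\Graph)\le L/3$ from Theorem~\ref{thm:main}, yielding $\mu_2(\Graph)\rho(\Graph)^2 \le \frac{\pi^2E^2}{L^2}\cdot\frac{L^2}{9} = \frac{\pi^2E^2}{9}$.

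The second upper bound $\pi^2(1+\beta)^2$ is where I expect the real difficulty, because the same recipe fails: substituting $\rho\le L/3$ into the edge bound only recovers $\pi^2E^2/9$, and there is no length bound on $\mu_2$ in terms of $\beta$ alone --- equilateral stars already make $\mu_2(\Graph)L^2 = \pi^2E^2/4$ arbitrarily large while $\beta=0$. The way around this is to route the comparison through the diameter. By Theorem~\ref{thm:diameter} we have $\rho(\Graph) < \diameter(\Graph)$, so that, invoking the diameter bound $\mu_2(\Graph)\,\diameter(\Graph)^2 \le \pi^2(1+\beta)^2$, I obtain $\mu_2(\Graph)\rho(\Graph)^2 \le \frac{\pi^2(1+\beta)^2}{\diameter(\Graph)^2}\,\rho(\Graph)^2 < \pi^2(1+\beta)^2$; taking the minimum of the two upper bounds then completes~\eqref{eq:comparison}. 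It is precisely this step that explains why Theorem~\ref{thm:diameter} is needed, and in the sharp form $\rho<\diameter$.

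The single ingredient not already contained in the excerpt is the diameter--spectral-gap bound $\mu_2(\Graph)\,\diameter(\Graph)^2 \le \pi^2(1+\beta)^2$, and securing it with exactly this constant is the crux of the matter; it is sharp on trees (for paths and equilateral stars one computes $\mu_2\,\diameter^2 = \pi^2 = \pi^2(1+0)^2$), consistent with the absolute upper bound $\pi^2$ for trees announced in the introduction. I would either cite it from the quantum-graph literature (see, e.g., \cite{BerKenKur19,Kur23}) or, if a clean reference is lacking, establish it directly by inserting a suitable function of the distance to a fixed point into the Rayleigh quotient~\eqref{eq:spectral-rayleigh}, with the factor $(1+\beta)$ reflecting the independent cycles that such a trial function must negotiate.
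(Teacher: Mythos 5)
Your proposal is correct and follows essentially the same route as the paper: all three bounds are obtained by pairing the bounds of Theorem~\ref{thm:main} (resp.\ the inequality $\rho \leq \diameter$ of Theorem~\ref{thm:diameter}) with the matching one-sided spectral estimate, namely Nicaise's inequality, the edge-number bound of \cite[Theorem~4.2]{KenKurMal16}, and the diameter bound $\mu_2(\Graph)\diameter(\Graph)^2 \leq \pi^2(1+\beta)^2$. The only difference is bibliographic: the paper takes this last ingredient ready-made from \cite[Theorem~5.2]{DufKenMug22} rather than the sources you suggest or a direct Rayleigh-quotient argument, so no new proof of it is required.
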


The fact that the respective minimisers and maximisers are in perfect correspondence, and more generally that the two quantities behave so analogously, suggests that there may actually be absolute upper and lower bounds, i.e., absolute constants independent of every graph which control $\mu_2(\Graph) \rho(\Graph)^2$ from above and below. At least for \emph{trees}, \eqref{eq:comparison} gives an absolute upper bound on this quantity (namely $\min \{\frac{\pi^2 E^2}{9}, \pi^2 \}$), which is sharp for path graphs $I$, where $\mu_2(I)\rho(I)^2 = \frac{\pi^2}{9}$. However, a direct computation using the values obtained in Section~\ref{sec:examples} shows that both for equilateral flowers and stars the product $\mu_2\rho^2$ is monotonically increasing and approaches $\frac{\pi^2}{4}$ as $E\to \infty$.

We note that there is a certain parallel between the mean distance of a metric graph and (upon imposing a Dirichlet condition on at least one vertex) its torsional rigidity, as the latter is known, too, to be maximised precisely by path graphs and minimised precisely by equilateral flowers, see~\cite[Section~4]{MugPlu23}: this behaviour is precisely opposite to that of the lowest positive Laplacian eigenvalue, so it seems natural to pose the following, once appropriate scaling is accounted for.

\begin{problem}
\label{problem:absolute}
Do there exist absolute constants $C,c>0$ such that
\begin{equation}\label{eq:polya?}
    c \leq \mu_2(\Graph) \rho(\Graph)^2 \leq C
\end{equation}
for \emph{all} compact graphs $\Graph$?
\end{problem}

Indeed, the conjectured lower bound recalls the celebrated Kohler-Jobin inequality, which asserts that, in $\R^d$, the normalised quantity $\lambda_1(\Omega) \tau(\Omega)^\frac{2}{d+2}$ (with $\lambda_1$ the first Dirichlet Laplacian eigenvalue and $\tau$ the torsional rigidity of $\Omega$) is minimised when $\Omega$ is a ball (see \cite{Koh78} or, e.g., \cite{Bra14,BriButPri22} for more recent results in the area, as well as \cite[Theorem~5.8]{MugPlu23} for the metric graph case). This, as well as our examples in Section~\ref{sec:examples}, provide some weak evidence suggesting that the path graph might actually minimise $\mu_2\rho^2$, which, if true, would result in an optimal value of $c=\frac{\pi^2}{9}$.

\begin{proof}[Proof of Corollary~\ref{cor:comparison}]
For the lower bound combine the lower bound in \eqref{eq:bounds-1} with Nicaise' inequality \cite[Th\'eor\`eme~3.1]{Nic87}. For the first upper bound use the upper bound in \eqref{eq:bounds-1} and \cite[Theorem~4.2]{KenKurMal16}, for the second combine $\rho(\Graph) \leq \diameter (\Graph)$ (cf.\ Theorem~\ref{thm:diameter}) with \cite[Theorem~5.2]{DufKenMug22}.
\end{proof}

We can give an alternative lower bound on $\rho$ involving the total length and $\mu_2$, which in combination with the lower bound in Theorem~\ref{thm:main} leads to an alternative to the lower bound in Corollary~\ref{cor:comparison}:

\begin{theorem}
\label{thm:direct-lower-bound}
Let $\Graph$ be a compact metric graph with total length $|\Graph| = L>0$. Then
\begin{equation}
\label{eq:direct-comparison}
    \frac{1}{L} \leq \mu_2(\Graph) \rho(\Graph).
\end{equation}
\end{theorem}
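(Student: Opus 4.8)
The plan is to prove the (slightly stronger) bound $\mu_2(\Graph)\rho(\Graph) \ge \tfrac{2}{L}$ by combining the variational characterisation~\eqref{eq:spectral-rayleigh} of $\mu_2$ with a pointwise comparison between increments of a test function and the metric $\dist$. Concretely, let $u\in H^1(\Graph)$ be an eigenfunction associated with $\mu_2(\Graph)$, normalised so that $\int_\Graph u\,\dx=0$; then $\|u'\|_{L^2(\Graph)}^2=\mu_2(\Graph)\|u\|_{L^2(\Graph)}^2$. The idea is to estimate the ``total variation integral'' $\int_\Graph\int_\Graph|u(x)-u(y)|^2\,\dy\,\dx$ from above in terms of $\dist$ (hence $\rho$) and compute it exactly from below using the zero-mean normalisation, and then to read off the inequality. (One could equally run the argument with an arbitrary zero-mean test function and take the supremum of $\|u\|_{L^2}^2/\|u'\|_{L^2}^2$, thereby avoiding any appeal to attainment of $\mu_2$.)

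The crux is the pointwise estimate: for a.e.\ $x,y\in\Graph$, integrating $u'$ along a shortest path $\gamma$ from $x$ to $y$ (of length $\dist(x,y)$) and applying Cauchy--Schwarz gives
\[
|u(x)-u(y)|^2 \;\le\; \dist(x,y)\int_\gamma |u'|^2 \;\le\; \dist(x,y)\,\|u'\|_{L^2(\Graph)}^2,
\]
where the final step uses that a shortest path is simple and so covers (a subset of) $\Graph$ at most once. Integrating over $\Graph\times\Graph$, the right-hand side becomes $\|u'\|_{L^2(\Graph)}^2\int_\Graph\int_\Graph\dist(x,y)\,\dy\,\dx=\|u'\|_{L^2(\Graph)}^2\,|\Graph|^2\rho(\Graph)$. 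For the left-hand side, expanding the square and using $\int_\Graph u\,\dx=0$ yields $\int_\Graph\int_\Graph|u(x)-u(y)|^2\,\dy\,\dx=2|\Graph|\,\|u\|_{L^2(\Graph)}^2$. Combining the two and cancelling $\|u\|_{L^2(\Graph)}^2$ via the eigenvalue relation leaves $2|\Graph|\le\mu_2(\Graph)\,|\Graph|^2\rho(\Graph)$, that is, $\mu_2(\Graph)\rho(\Graph)\ge 2/|\Graph|\ge 1/L$, which is the claimed bound.

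The main obstacle is to make the geodesic step fully rigorous. This requires: the existence of a rectifiable shortest path between (almost) any two points, which holds since a compact connected metric graph is a geodesic space; the validity of the fundamental theorem of calculus for $u$ along $\gamma$ \emph{across vertices}, which is guaranteed by the continuity $H^1(\Graph)\subset C(\Graph)$; and, most delicately, the inequality $\int_\gamma|u'|^2\le\|u'\|_{L^2(\Graph)}^2$, which relies on the simplicity of shortest paths so that no portion of any edge is integrated twice. Measurability in $(x,y)$ poses no difficulty, as we integrate only the already-established pointwise inequality, both sides of which are measurable functions of $(x,y)$; in particular no measurable selection of geodesics is needed.
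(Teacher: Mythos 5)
Your proof is correct, and it takes a genuinely different route from the paper's. The paper anchors the argument at a single well-chosen point $\mv$ with $\rho_\Graph(\mv) = \rho(\Graph)$ (which exists because $\rho(\Graph)$ is the mean value of the continuous function $\rho_\Graph$), proves the one-point estimate $1 \leq \lambda_1(\Graph,\mv)\,\rho(\Graph)\,L$ for the Laplacian with a Dirichlet condition at $\mv$, and then needs a separate comparison lemma, $\lambda_1(\Graph,\mv) \leq \mu_2(\Graph)$, proved via nodal domains of a $\mu_2$-eigenfunction and domain monotonicity, to conclude. You instead work directly with the Rayleigh quotient \eqref{eq:spectral-rayleigh}: the same Cauchy--Schwarz-along-geodesics estimate, but applied symmetrically to pairs $(x,y)$ and combined with the identity
\begin{displaymath}
\int_\Graph\int_\Graph |u(x)-u(y)|^2\,\dy\,\dx = 2L\,\|u\|_{L^2(\Graph)}^2,
\end{displaymath}
valid for zero-mean $u$ (the cross term vanishes). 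This bypasses both the choice of a special point and the auxiliary Dirichlet problem with its nodal-domain lemma, and, as you note, it yields the strictly stronger bound $\mu_2(\Graph)\rho(\Graph) \geq 2/L$, a factor-2 improvement over \eqref{eq:direct-comparison}; the gain comes precisely from exploiting the zero-mean condition through the expansion of $|u(x)-u(y)|^2$ rather than through an eigenvalue comparison. Your treatment of the technical points is sound: shortest paths in a compact metric graph are simple arcs, so $\int_\gamma |u'|^2 \leq \|u'\|_{L^2(\Graph)}^2$; the fundamental theorem of calculus holds along a geodesic across vertices since $H^1(\Graph)\subset C(\Graph)$ and the path is a finite concatenation of edge segments; and indeed no measurable selection of geodesics is needed, since only the resulting pointwise inequality (with continuous left- and right-hand sides) is integrated. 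Your remark that one may use arbitrary zero-mean test functions and pass to the infimum also correctly removes any reliance on attainment of $\mu_2$. What the paper's longer route buys in exchange is the intermediate Dirichlet eigenvalue bound and the comparison lemma $\lambda_1(\Graph,\mv)\le\mu_2(\Graph)$, which are of independent interest elsewhere in the literature, but for the purposes of this theorem your argument is both shorter and quantitatively sharper.
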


\begin{corollary}
\label{cor:mu-rho-lower-bound}
Let $\Graph$ be a compact metric graph. Then
\begin{displaymath}
    \frac{2E-1}{4E^2} \leq \mu_2 (\Graph) \rho (\Graph)^2.
\end{displaymath}
\end{corollary}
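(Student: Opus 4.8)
The plan is to combine the two lower bounds already available in the excerpt, both of which involve the product of $\mu_2(\Graph)$ and a power of $\rho(\Graph)$. Specifically, Theorem~\ref{thm:direct-lower-bound} gives $\frac{1}{L} \leq \mu_2(\Graph)\rho(\Graph)$, while the lower bound in Theorem~\ref{thm:main} (the first inequality in \eqref{eq:bounds-1}) gives $\left(\frac{2E-1}{4E^2}\right)L \leq \rho(\Graph)$. The key observation is that the factor $\frac{1}{L}$ in the first estimate and the factor $L$ in the second are reciprocal, so multiplying the two together should eliminate the total length $L$ entirely and produce a bound depending only on the combinatorial quantity $E$, which is exactly the form of the claimed inequality.

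Concretely, I would first rewrite Theorem~\ref{thm:direct-lower-bound} in the form $\frac{\rho(\Graph)}{L} \leq \mu_2(\Graph)\rho(\Graph)^2$, obtained simply by multiplying both sides of \eqref{eq:direct-comparison} by $\rho(\Graph) > 0$ (note $\rho(\Graph) > 0$ since $\diameter(\Graph) > 0$ for any genuine connected graph, so this manipulation preserves the inequality). Then I would substitute the lower bound $\rho(\Graph) \geq \left(\frac{2E-1}{4E^2}\right)L$ from \eqref{eq:bounds-1} into the left-hand side $\frac{\rho(\Graph)}{L}$; since dividing by $L$ is order-preserving, this yields
\[
\frac{2E-1}{4E^2} = \frac{1}{L}\left(\frac{2E-1}{4E^2}\right)L \leq \frac{\rho(\Graph)}{L} \leq \mu_2(\Graph)\rho(\Graph)^2,
\]
which is precisely the assertion of the corollary.

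Since this is a direct corollary obtained by chaining two previously established results, I do not expect any genuine obstacle: the entire argument is a one-line multiplication together with a substitution, and the scaling works out so that $L$ cancels cleanly. The only point requiring a moment's care is confirming that $\rho(\Graph) > 0$, so that multiplying \eqref{eq:direct-comparison} through by $\rho(\Graph)$ is legitimate and strict positivity is never actually needed for the non-strict conclusion; this follows immediately from Theorem~\ref{thm:diameter} together with the fact that any connected compact metric graph with at least one edge has positive diameter. Thus the proof is essentially a formal combination, and the substance of the result lies entirely in the two theorems being combined rather than in this final step.
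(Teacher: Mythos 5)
Your proof is correct and is exactly the argument the paper intends: the corollary is stated immediately after Theorem~\ref{thm:direct-lower-bound} precisely as the combination of that bound with the lower bound in \eqref{eq:bounds-1}, i.e.\ $\mu_2(\Graph)\rho(\Graph)^2 \geq \frac{\rho(\Graph)}{L} \geq \frac{2E-1}{4E^2}$. Your additional remark on the positivity of $\rho(\Graph)$ is a harmless extra check; nothing further is needed.
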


While still unlikely to be optimal, this is generally better than the lower bound in Corollary~\ref{cor:comparison}; as $E \to \infty$ it behaves like $\frac{1}{2E}$ rather than $\frac{\pi^2}{4E^2}$. In fact, the lower bound in Corollary~\ref{cor:mu-rho-lower-bound} is seen to be larger whenever $\frac{\pi^2}{4E^2}(2E-1) < 1$, that is, $4E^2 - 2\pi^2 E + \pi^2 > 0$, which holds for all $E \geq 5$.

The proof of Theorem~\ref{thm:direct-lower-bound} is based on a suitable eigenvalue comparison argument, which involves linking the first eigenvalue of an auxiliary problem with a Dirichlet condition at a well-chosen vertex on the one hand, and a related distance function on the other, and is given in Section~\ref{sec:test}.

\section{Examples}
\label{sec:examples}

Here we will collect some examples where one can determine $\rho$ explicitly: equilateral star graphs (including path graphs, i.e. intervals, as a special case), the ``firework graphs'' that will be used to complete the proof of Theorem~\ref{thm:diameter}, and equilateral flowers (including on one edge, i.e. loops). For purposes of legibility, we will generally use $m$ and $n$ to denote the number of edges and vertices, respectively, of our example graphs.

\begin{example}[Star graph]
\label{ex:star}
For a given $m \geq 1$ and $L > 0$, consider $S_m$ an equilateral star graph, that is, a graph with $m$ edges of equal length $\ell:= \frac{L}{m}$ (and thus total length $L$) and $m+1$ vertices, one of degree $m$ (the ``central vertex'') and all others of degree $1$. To calculate $\rho$, we identify each of the $m$ identical edges with an interval of the form $[0,\frac{L}{m}]$, where $0$ corresponds to the central vertex. Then, for any $x \in S_m$,
\begin{displaymath}
\begin{aligned}
    \rho_{S_m}(x)&=\frac{1}{L} \int_{S_m} \dist(x,y) \dy \\
    &=\frac{1}{L}\left[\int_0^{\frac{L}{m}} |y-x| \dy + (m-1)\int_0^{\frac{L}{m}} \big(y+x\big) \dy\right]\\
    &=\frac{x^2}{L}+\frac{m-2}{m}x + \frac{L}{2m};
\end{aligned}
\end{displaymath}
note that this includes the special case of an interval $I$ ($m=1$, with the interval parametrised as $[0,L]$), where
\begin{equation}
\label{eq:rho-interval}
    \rho_I (x) = \frac{x^2}{L} - x + \frac{L}{2}.
\end{equation}
Integrating $\rho_{S_m}$ over all $x \in [0,\frac{L}{m}]$ (and using the symmetry of the graph) yields
\begin{displaymath}
    \rho(S_m)=\frac{L}{m^2}\left(m-\frac{2}{3}\right).
\end{displaymath}
Let us consider the behaviour of the formula as a function of $m$. If $m=1$ or $m=2$, then $\rho(S_m)=\frac{L}{3}$, which is exactly the value of $\rho$ on an interval (note that stars on one and two edges are both path graphs).
   
Now fix $L$, then since the function $x \mapsto \frac{L}{x^2}(x-\frac{2}{3})$, $x > 0$, attains its maximum at $x=\frac{4}{3}$ (and the same value at $x=1$ and $x=2$), and is monotonically decreasing for $x > \frac{4}{3}$, the maximum in $m$ is attained at $m=1,2$, and $\rho(S_m)$ is a decreasing function of $m \geq 1$. We observe that $\rho(S_m) \to 0$ as $m \to \infty$.

If instead we fix the length of each edge $\ell=\frac{L}{m}$ (and vary $m$), then
\begin{displaymath}
    \rho(S_m)=\ell-\frac{2}{3m}\ell \longrightarrow \ell
\end{displaymath}    
as $m \to \infty$.
\end{example}

We next consider a class of graphs built around stars, which include the sequence $\Graph_n$ referred to in Theorem~\ref{thm:diameter}, for which $\frac{\rho(\Graph_n)}{\diameter(\Graph_n)} \to 1$.

\begin{example}[Firework graph]
\label{ex:firework}
Here we will construct a four-parameter family of graphs, which for simplicity we will simply call $\Graph$, as follows. We start with an equilateral star on $m \geq 1$ edges, each of length $J>0$, and then attach, at each of the $m$ degree one vertices, a copy of another star $*$  with $n$ edges of equal length $j$; see Figure~\ref{fig:firework}. In particular, $|\Graph|=mJ+mnj$, and $\diameter(\Graph) = 2J + 2j$.
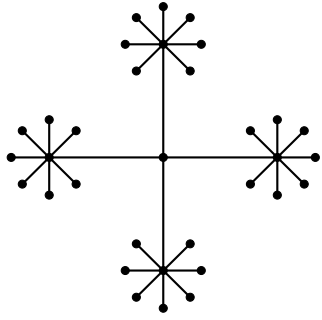
\begin{figure}[ht]
\centering
\begin{tikzpicture}
\draw[thick] (0,0) -- (0,1.5);
%\draw[thick] (0,0) -- (0.866,-0.5);
%\draw[thick] (0,0) -- (-0.866,-0.5);
\draw[thick] (0,0) -- (1.5,0);
\draw[thick] (0,0) -- (0,-1.5);
\draw[thick] (0,0) -- (-1.5,0);
\draw[fill] (0,0) circle (1.5pt);
\draw[fill] (0,1.5) circle (1.5pt);
\draw[fill] (1.5,0) circle (1.5pt);
\draw[fill] (0,-1.5) circle (1.5pt);
\draw[fill] (-1.5,0) circle (1.5pt);
\draw[thick] (0,1.5) -- (0.5,1.5);
\draw[fill] (0.5,1.5) circle (1.5pt);
\draw[thick] (0,1.5) -- (-0.5,1.5);
\draw[fill] (-0.5,1.5) circle (1.5pt);
\draw[thick] (0,1.5) -- (0,2);
\draw[fill] (0,2) circle (1.5pt);
\draw[thick] (0,1.5) -- (0.354,1.854);
\draw[fill] (0.354,1.854) circle (1.5pt);
\draw[thick] (0,1.5) -- (-0.354,1.854);
\draw[fill] (-0.354,1.854) circle (1.5pt);
\draw[thick] (0,1.5) -- (0.354,1.5-0.354);
\draw[fill] (0.354,1.5-0.354) circle (1.5pt);
\draw[thick] (0,1.5) -- (-0.354,1.5-0.354);
\draw[fill] (-0.354,1.5-0.354) circle (1.5pt);
\draw[thick] (1.5,0) -- (1.5,0.5);
\draw[fill] (1.5,0.5) circle (1.5pt);
\draw[thick] (1.5,0) -- (1.5,-0.5);
\draw[fill] (1.5,-0.5) circle (1.5pt);
\draw[thick] (1.5,0) -- (2,0);
\draw[fill] (2,0) circle (1.5pt);
\draw[thick] (1.5,0) -- (1.854,0.354);
\draw[fill] (1.854,0.354) circle (1.5pt);
\draw[thick] (1.5,0) -- (1.854,-0.354);
\draw[fill] (1.854,-0.354) circle (1.5pt);
\draw[thick] (1.5,0) -- (1.5-0.354,0.354);
\draw[fill] (1.5-0.354,0.354) circle (1.5pt);
\draw[thick] (1.5,0) -- (1.5-0.354,-0.354);
\draw[fill] (1.5-0.354,-0.354) circle (1.5pt);
\draw[thick] (-1.5,0) -- (-1.5,0.5);
\draw[fill] (-1.5,0.5) circle (1.5pt);
\draw[thick] (-1.5,0) -- (-1.5,-0.5);
\draw[fill] (-1.5,-0.5) circle (1.5pt);
\draw[thick] (-1.5,0) -- (-2,0);
\draw[fill] (-2,0) circle (1.5pt);
\draw[thick] (-1.5,0) -- (-1.854,0.354);
\draw[fill] (-1.854,0.354) circle (1.5pt);
\draw[thick] (-1.5,0) -- (-1.854,-0.354);
\draw[fill] (-1.854,-0.354) circle (1.5pt);
\draw[thick] (-1.5,0) -- (-1.5+0.354,0.354);
\draw[fill] (-1.5+0.354,0.354) circle (1.5pt);
\draw[thick] (-1.5,0) -- (-1.5+0.354,-0.354);
\draw[fill] (-1.5+0.354,-0.354) circle (1.5pt);
\draw[thick] (0,-1.5) -- (0.5,-1.5);
\draw[fill] (0.5,-1.5) circle (1.5pt);
\draw[thick] (0,-1.5) -- (-0.5,-1.5);
\draw[fill] (-0.5,-1.5) circle (1.5pt);
\draw[thick] (0,-1.5) -- (0,-2);
\draw[fill] (0,-2) circle (1.5pt);
\draw[thick] (0,-1.5) -- (0.354,-1.854);
\draw[fill] (0.354,-1.854) circle (1.5pt);
\draw[thick] (0,-1.5) -- (-0.354,-1.854);
\draw[fill] (-0.354,-1.854) circle (1.5pt);
\draw[thick] (0,-1.5) -- (0.354,-1.5+0.354);
\draw[fill] (0.354,-1.5+0.354) circle (1.5pt);
\draw[thick] (0,-1.5) -- (-0.354,-1.5+0.354);
\draw[fill] (-0.354,-1.5+0.354) circle (1.5pt);
\end{tikzpicture}
\caption{A firework graph with $m=4$, $n=7$, and $J=5j$.}
\label{fig:firework}
\end{figure}
We will show that, for the correct choice of $m, n, J, j$, $\rho(\Graph)$ can be made arbitrarily close to $\diameter(\Graph)$ in the sense of Theorem~\ref{thm:diameter}; for this we merely need to estimate $\rho (\Graph)$ from below.

First, it is obvious that, for any $x \in \Graph$ in any copy of $*$,
\begin{displaymath}
    \rho_{\Graph}(x) \geq \frac{1}{|\Graph|}2J(m-1)nj.
\end{displaymath}
Indeed, we may estimate $\dist(x,y)$ from below by $2J$ for each $y$ in each of the other $m-1$ copies of $*$; this can then be integrated over all such copies, each of which has total length $nj$. This then leads to

\begin{displaymath}
\begin{aligned}
    \frac{1}{|\Graph|}\int_{\Graph}\rho_{\Graph}(x) \dx
    & \geq  \frac{1}{|\Graph|}\int_{\cup *}\rho_{\Graph}(x)\dx \\
    & \geq \frac{1}{|\Graph|} nmj \frac{1}{|\Graph|} 2J(m-1)nj \\
    & = \frac{n^2m(m-1)j^2 2J}{(mJ + mnj)^2} %\\
    %& 
    = 2J \cdot \frac{m-1}{m} \cdot \frac{n^2j^2}{(J+nj)^2}
\end{aligned}
\end{displaymath}
and thus (cf.\ \eqref{eq:rho-diameter})
\begin{displaymath}
    \diameter(\Graph) > \rho(\Graph) \geq 2J \cdot \frac{m-1}{m} \cdot \frac{n^2j^2}{(J + nj)^2}.
\end{displaymath}
Fixing $m$ and $J$, we consider any sequence of such graphs for which $j \rightarrow 0$ but simultaneously $nj \rightarrow \infty$, then $\frac{n^2j^2}{(J + nj)^2} \rightarrow 1$ and $\diameter(\Graph) \rightarrow 2J$. We can thus, given any $\varepsilon>0$, find such a firework graph $\Graph_{m,\varepsilon}$ for which $2J < \diameter (\Graph_{m,\varepsilon}) < 2J + \varepsilon$ and
\begin{displaymath}
    \diameter (\Graph_{m,\varepsilon}) \geq \rho (\Graph_{m,\varepsilon}) \geq \frac{m-1}{m}\cdot \diameter (\Graph_{m,\varepsilon}) - \varepsilon.
\end{displaymath}
From this we can extract a sequence of graphs satisfying the conclusion of Theorem~\ref{thm:diameter}.
\end{example}

For our third example we consider the flower graphs which will play a role in the lower bound in Theorem~\ref{thm:main}. We will calculate mean distance only in the case of equilateral flowers, but Lemma~\ref{lem:flower} below gives a comparison with the non-equilateral case.

\begin{example}[Flower graph]
\label{ex:flower}
Given $m \geq 1$ and $L > 0$, take $F_m$ to be the flower graph consisting of $m$ edges of length $\ell = \frac{L}{m}$ each, all glued at both ends at a common central vertex; that is, $F_m$ has exactly one vertex, of degree $2m$.

We start by noting the following formula for the integral of the distance function $\dist(x,y)$ over a circle $\mathcal{C}$ (of length $\frac{L}{m}$), i.e., when both $x$ and $y$ are taken from the same petal, which can be obtained by a direct calculation: parametrising the loop by $[-\frac{L}{2m},\frac{L}{2m}]$, with $x \simeq 0$ and the points $\frac{L}{2m}$ and $-\frac{L}{2m}$, identified in $\mathcal{C}$, representing the antipodal point to $x$, we have
\begin{displaymath}
    \int_{\mathcal{C}} \dist(x,y) \,\dy = \int_0^{\frac{L}{2m}} y\,\dy + \int_{-\frac{L}{2m}}^0 (-y)\,\dy = \frac{L}{4m^2}.
\end{displaymath}
%\begin{displaymath}
%    \int_{\mathcal{C}} \dist(x,y) \dy
%    =\int_0 ^{\frac{L}{2m}} |y-x| \, dy + \int_{-\frac{L}{2m}+x}^0 -y+x \, dy+ \int_{-\frac{L}{2m}}^{-\frac{L}{2m}+x} \frac{L}{2}+y-x+\frac{L}{2m} \, dy =  \frac{L^2}{4m^2}
%\end{displaymath}
This is, in particular, independent of $x$ and $y$; and thus, for all $x \in {\mathcal{C}}$
\begin{equation}
\label{eq:rho-circle}
    \rho ({\mathcal{C}}) = \rho_{\mathcal{C}} (x) = \frac{L}{4m^2} = \frac{|{\mathcal{C}}|}{4m}.
\end{equation}
%$\rho_{Circle}(x)=\frac{1}{l}(\frac{l^2}{4m^2})=\frac{l}{4m^2}$
We now return to the flower. For convenience, for the rest of the example we will parametrise the edges as follows: we consider each loop (or \emph{petal}) of the flower at having a dummy vertex at the far point from the central vertex, and identify the loop with $[-\frac{L}{2m},\frac{L}{2m}]$, where $0$ corresponds to the central vertex, and the points $-\frac{L}{2m}$ and $\frac{L}{2m}$ are identified at the dummy vertex.

We can now calculate $\rho_{F_m}(x)$ for a given $x \in F_m$. With the above parametrisation of the edges,
\begin{displaymath}
\begin{aligned}
    \rho_{F_m}(x) &=\frac{1}{L}\left[\int_{\mathcal{C}} \dist(x,y) \dy + 2(m-1)\int_0 ^{\frac{L}{2m}} \big(x+y\big) \,\dy\right]\\
    &=\frac{1}{L}\left(\frac{L^2}{4m^2}+2(m-1)\frac{L(L+4mx)}{8m^2}\right).
\end{aligned}
\end{displaymath}
%$$\rho_F(x)=\frac{1}{l}(\int_C \dist(x,y) \, dy + 2(m-1)\int_0 ^{\frac{l}{2m}} x+y \, dy=\frac{1}{l}(\frac{l^2}{4m^2}+2(m-1)\frac{l(l+4mx)}{8m^2}))$$
Integrating over $x \in [-\frac{L}{2m},\frac{L}{2m}]$ and using the symmetry of the graph finally yields
\begin{displaymath}
    \rho(F_m) = \frac{L}{4m}\left(\frac{2m-1}{m}\right).
\end{displaymath}

Let us consider how $\rho$ behaves as a function of $m \geq 1$. If the total length $L$ is fixed, then, since $x \mapsto \frac{L}{4x}(\frac{2x-1}{x})$ attains a global maximum for $x>0$ between $0$ and $1$, then decreasing to $0$ as $x \to \infty$, it follows that $\rho(F_m)$ attains its maximum at $m=1$, is strictly monotonically decreasing in $m \geq 1$, and $\rho(F_m) \to 0$ as $m \to \infty$.

If instead we fix the length of each edge $\ell = \frac{L}{m}$, we have
\begin{displaymath}
    \rho(F_m)=\frac{\ell(2m-1)}{4m} \longrightarrow \frac{\ell}{2}
\end{displaymath}
as $m \to \infty$.
\end{example}

We finish this section with an optimisation result for flower graphs, which will be needed in the proof of the lower bound in \eqref{eq:bounds-1} in Section~\ref{sec:surgery}. While elementary, its proof is necessarily somewhat technical.

\begin{lemma}
\label{lem:flower}
For any flower graph $F$ of total length $L$, on $m$ edges, we have
\begin{displaymath}
	\rho (F) \geq \left(\frac{2m-1}{4m^2}\right)L,
\end{displaymath}
with equality if and only if $F$ is an equilateral flower on $m$ edges of length $L/m$ each.
\end{lemma}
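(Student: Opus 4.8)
The plan is to reduce the statement to a one-dimensional convexity argument after deriving an exact formula for $\rho(F)$ in terms of the petal lengths. Write $F$ as a flower with petals (loops) $\mathcal{C}_1, \dots, \mathcal{C}_m$ of lengths $\ell_1, \dots, \ell_m > 0$ with $\sum_i \ell_i = L$, all attached at the single central vertex $o$. I would split the double integral defining $\rho(F)$ according to whether $x$ and $y$ lie in the same petal or in different petals. For $x, y$ in the same petal the relevant quantity is exactly the circle integral computed in the lead-up to \eqref{eq:rho-circle}, giving $\int_{\mathcal{C}_i}\dist(x,y)\,\dy = \ell_i^2/4$ and hence a same-petal contribution $\ell_i^3/4$. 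For $x \in \mathcal{C}_i$ and $y \in \mathcal{C}_j$ with $i \neq j$, every path between them must pass through $o$, so $\dist(x,y) = \dist(x,o) + \dist(o,y)$; using $\int_{\mathcal{C}_i}\dist(x,o)\,\dx = \ell_i^2/4$ this produces a cross contribution which, after summing over $i\neq j$ and using $\sum_{j\neq i}\ell_j = L - \ell_i$, can be expressed purely through $\sum_i \ell_i^2$ and $\sum_i \ell_i^3$.

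Carrying out this bookkeeping I expect to arrive at
\[
\rho(F) = \frac{1}{L^2}\left(\frac{L}{2}\sum_{i=1}^m \ell_i^2 - \frac14\sum_{i=1}^m \ell_i^3\right) = \frac{1}{L^2}\sum_{i=1}^m g(\ell_i), \qquad g(t) := \frac{L}{2}t^2 - \frac14 t^3,
\]
which one checks reduces to $\frac{(2m-1)L}{4m^2}$ at the equilateral point $\ell_i = L/m$, consistent with Example~\ref{ex:flower}. The lemma then amounts to showing that $\sum_i g(\ell_i)$ is minimised over the simplex $\{\ell_i \geq 0,\ \sum_i \ell_i = L\}$ precisely at the equilateral configuration.

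The main obstacle is that $g$ is \emph{not} convex on all of $[0,L]$: since $g''(t) = L - \tfrac32 t$, it is convex only for $t \leq \tfrac{2L}{3}$, so a direct appeal to Jensen's inequality fails (a single long petal is allowed, and the Hessian of the objective on the simplex is genuinely indefinite near the boundary for large $m$). The resolution I would use is to argue by pairwise equalisation rather than by global convexity. Fixing all but two petals and their sum $s = \ell_i + \ell_j \leq L$, the reduced function $\ell \mapsto g(\ell) + g(s - \ell)$ has second derivative $g''(\ell) + g''(s - \ell) = 2L - \tfrac32 s \geq \tfrac{L}{2} > 0$; the crucial point is that the troublesome cubic terms combine so that this quantity is independent of $\ell$ and \emph{always} strictly positive. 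Hence this reduced function is strictly convex and uniquely minimised at $\ell = s/2$, i.e. when the two petals are made equal.

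To finish, I would invoke compactness: $\sum_i g(\ell_i)$ is continuous on the closed simplex and attains its minimum at some $\ell^{\ast}$. If two coordinates of $\ell^{\ast}$ were unequal, the pairwise step would strictly decrease the sum while remaining in the simplex, contradicting minimality; therefore all coordinates are equal, forcing $\ell^{\ast} = (L/m, \dots, L/m)$, which lies in the interior and so is a genuine flower on $m$ edges. This identifies the equilateral flower as the unique minimiser, with value $\frac{(2m-1)L}{4m^2}$, and yields strict inequality for every non-equilateral $F$, since any such $F$ admits a strictly decreasing equalisation step.
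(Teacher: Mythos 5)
Your proof is correct and follows essentially the same route as the paper: an exact polynomial formula for $\rho(F)$ in the petal lengths, then a two-petal equalisation step (the paper likewise fixes $\ell_1+\ell_2$ and observes that the troublesome cubic terms cancel, leaving a strictly convex quadratic minimised at the equal split), and finally compactness to pass to the global statement. If anything, your write-up is tidier: the separable form $\rho(F)=\frac{1}{L^2}\sum_i g(\ell_i)$ makes the cancellation transparent, and your finish (a minimiser exists on the closed simplex and must be equilateral, else an equalisation step strictly improves it) is a cleaner formulation than the paper's iterative equalisation-and-convergence argument.
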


\begin{proof}
Fix a flower graph $F$ with edge lengths $\ell_1,\ldots,\ell_m$, so that $\sum_{i=1}^m \ell_i = L$, and denote by $0$ the central vertex. We start by recalling from \eqref{eq:rho-circle} that, if $\mathcal{C}$ is a loop of length $\ell > 0$, then $\rho_{\mathcal{C}} (x) = \frac{\ell}{4}$ for all $x \in \mathcal{C}$, and so
\begin{displaymath}
    \rho (\mathcal{C})=\frac{1}{\ell}\int_{\mathcal{C}} \frac{\ell}{4} \dx =\frac{\ell}{4}
\end{displaymath}
as well. Now a similar calculation to the one for the equilateral flower graph yields that, for any given $x \in F$, supposing that $x \in e_i$ and $|e_i| = \ell_i$,
\begin{displaymath}
%\begin{aligned}
    \rho_{F}(x)=\frac{1}{L}\left(\frac{\ell_{i}^2}{4}+\sum_{j \neq i}\ell_j\left(\dist(x,0)+\frac{\ell_j}{4}\right) \right),
%\end{aligned}
\end{displaymath}
where $\dist(x,0)$ represents the distance between $x$ and the central vertex $0$.
Integrating over the corresponding edge $e_i$ gives
\begin{displaymath}
    \int_{e_i} \rho_F(x) \dx = \frac{\ell_i}{L}\left(\frac{\ell_{i}^2}{4}+\sum_{j \neq i}\ell_j\left(\frac{\ell_i}{4}+\frac{\ell_j}{4}\right) \right),
\end{displaymath}
whence
\begin{equation}
\label{eq:flower-general-rho}
    \rho(F) = \frac{1}{4L^2}\sum_{i=1}^m \left(\ell_{i}^3+\sum_{j \neq i}\ell_i \ell_j(\ell_i + \ell_j)\right).
\end{equation}
%If we suppose the flower graph has only two petals $(l_1,l_2)$ we get, recalling that in this case $l_1+l_2=L$:
%$$\rho (F')=\frac{1}{4L^2}(l_1^3+l_2^3+2l_1l_2L)$$
%This expression has a minimum (since the graph is a compact set and the function is continuous), which is seen to be attained when $l_1=l_2=\frac{L}{2}$.
%Now if we iterate this in any two petals then we get that the minimum of the function $\rho (F_L)$, that exists by the same argument as before, is attained when $l_i=\frac{L}{n}, \forall i \in \{1,2,...,n\}$
We will now examine what happens if we perturb the lengths of two petals of $F$, without loss of generality $e_1$ and $e_2$, in such a way that the overall length remains constant: we will, in effect, consider $\frac{\partial \rho}{\partial \ell_1}$ under the assumption that $\ell_3,\ldots,\ell_m$ are constant, and $\ell_2 = L - \ell_1 - \sum_{i=3}^m \ell_i$. To this end we first multiply out the constant term $4L^2$ and isolate the parts of \eqref{eq:flower-general-rho} that depend on $\ell_1$ and $\ell_2$:
\begin{displaymath}
\begin{aligned}
   \ 4L^2\rho(F) &=\sum_{i=1}^m \Bigg[\ell_{i}^3+\sum_{\substack{ j \neq i}}\ell_i \ell_j(\ell_i + \ell_j)\Bigg] \\
   &=\ell_1^3+\sum_{j \neq 1} \ell_1\ell_j(\ell_1+\ell_j)+\ell_2^3+\sum_{j \neq 2} \ell_2\ell_j(\ell_2+\ell_j)+\sum_{i=3}^m \Bigg[\ell_{i}^3+\sum_{\substack{ j \neq i}}\ell_i \ell_j(\ell_i + \ell_j)\Bigg]\\
    &=\ell_1^3+2\sum_{\substack{j \neq 1\\j \ge 2 }} \ell_1\ell_j(\ell_1+\ell_j)+\ell_2^3+2\sum_{\substack{j \neq 2\\j \ge 3}} \ell_2\ell_j(\ell_2+\ell_j)+\underbrace{\sum_{i=3}^m \Bigg[\ell_{i}^3+\sum_{\substack{j \neq i\\j \ge 3 }}\ell_i \ell_j(\ell_i + \ell_j)\Bigg]}_{=:d}, %\\
\end{aligned}
\end{displaymath}
where since the latter sum does not depend on $\ell_1$ or $\ell_2$, we may treat it as a constant $d$ and subtract it from the total; we will thus work with the quantity $D:=4L^2\rho(F)-d$, which we will attempt to minimise in function of $\ell_1$. To simplify the calculation, we will set $B:=\sum_{i=3}^m \ell_i$ and $C:=\ell_1+\ell_2$, which we recall we are treating as constant; we also recall that $L=\sum_{i=1}^m \ell_i$, and in particular $L = B + C$. We may then calculate
\begin{displaymath}
\begin{aligned}
  D &=\ell_1^3+\ell_2^3+2\ell_1\ell_2(\ell_1+\ell_2)+2\sum_{i=3}^m \left[ \ell_2\ell_i(\ell_2+\ell_i)+\ell_1\ell_i(\ell_1+\ell_i)\right] \\
   &=\ell_1^3+\ell_2^3+2\ell_1\ell_2C+2C\sum_{i=3}^m \ell_i^2+2(\ell_1^2+\ell_2^2)\sum_{i=3}^m \ell_i \\
   &=\ell_1^3+\ell_2^3+2\ell_1\ell_2C+2C\sum_{i=3}^m \ell_i^2+2(\ell_1^2+\ell_2^2)(L-C).
\end{aligned}
\end{displaymath}
Writing $\ell_2 = L - B - \ell_1$, we may simplify this to
\begin{displaymath}
  D =\ell_1^3+(C-\ell_1)^3+2\ell_1(C-\ell_1)C+2(L-B)\sum_{i=3}^m l_i^2+2(\ell_1^2+(C-\ell_1)^2)B;
\end{displaymath}
since $L-B$ does not depend on $\ell_1$, under our assumptions we may rewrite $D$ as
\begin{displaymath}
    D = (4B+C)\ell_1^2 - (4BC+C^2)\ell_1 + \text{constant indep.\ of }\ell_1,
\end{displaymath}
which is quadratic in $\ell_1$. In particular, $\frac{\partial D}{\partial \ell_1}$ (itself equal to $4L^2\frac{\partial \rho}{\partial \ell_1}$) is zero if and only if
%under our assumptions $\frac{\partial D}{\partial \ell_1}$ (itself equal to $4L^2\frac{\partial \rho}{\partial \ell_1}$) is equal to $\frac{\partial h}{\partial \ell_1}$, where we define $h$ by discarding the sum from $D$ which does not depend on $\ell_1$:
%\begin{displaymath}
%\begin{aligned}
%    h(\ell_1) &:= \ell_1^3+(C-\ell_1)^3+2\ell_1(C-\ell_1)C+2(\ell_1^2+(C-\ell_1)^2)B\\
%    &= C^3-C^2\ell_1+C\ell_1^2+4B\ell_1^2-2BC\ell_1+2BC^2,
%\end{aligned}
%\end{displaymath}
%which is quadratic in $\ell_1$. A simple calculation now shows that $\frac{\partial h}{\partial \ell_1} = 0$ if and only if
\begin{displaymath}
    \ell_1 = \frac{C}{2} = \frac{L-B}{2};
\end{displaymath}
moreover, this represents the unique global minimum of $D$ (and thus of $\rho$) for $\ell_1 \in [0,C]$.
%Now we need to discover the solution to $ (\frac{\partial \rho}{\partial l_2} =)\frac{\partial \rho}{\partial l_1}=0$.
%\begin{align*} 
%    \frac{\partial \rho}{\partial l_1}=-c^2+2cl_1+8Bl_1-4cB=0 \\  
%    (2c+8B)l_1=c^2+4cB \\
%    l_1=\frac{c(c+4B)}{2(c+4B)}=\frac{c}{2}=\frac{L-B}{2} \\
%\end{align*}  

Summarising, among all flowers on $m$ petals of given total length $L$ and fixed edge lengths $\ell_3,\ldots,\ell_m$, $\rho$ is minimised exactly when $\ell_1 = \ell_2$; moreover, holding all other lengths fixed, any flower graph for which $\ell_1 \neq \ell_2$ will have strictly greater $\rho$ than the one for which there is equality.

Hence, starting from any given non-equilateral flower $F$ on $m$ edges and iteratively applying this argument to pairs of its petals, we obtain a sequence of flower graphs with strictly decreasing $\rho$, whose edge lengths will converge in the limit to those of the equilateral flower. Since for fixed $L>0$ and $m\geq 1$, $\rho$ is obviously continuous with respect to the $m$ edge lengths (being a polynomial, see \ref{eq:flower-general-rho}), which are drawn from a compact subset of $[0,L]^m$, it is immediate that $\rho$ must attain its unique global minimum among all flowers of length $L>0$ on $m$ edges, at the equilateral flower.
\end{proof}

\section{Surgery principles}
\label{sec:surgery}

In this section we list and prove two basic \emph{surgery principles} for $\rho$, which we then use to prove our main bounds \eqref{eq:bounds-1} (including the characterisation of equality). These principles, which give sufficient conditions under which making a ``local'' change to a graph $\Graph$ to create a new graph $\widetilde\Graph$ (e.g. cutting through a vertex, lengthening an edge, attaching or removing a pendant graph) will raise or lower $\rho$; they are generally inspired by, and similar to, such surgery principles for eigenvalues of the Laplacian with standard vertex conditions (see \cite{BerKenKur19}). There will certainly be many more such principles beyond these two, but these are the two needed to prove \eqref{eq:bounds-1}.

The first, cutting through vertices, has a long history in the context of Laplacian eigenvalues (going back at least to \cite[Theorem~1]{KurMalNab13} but implicit in the proof of \cite[Th\'eor\`eme~3.1]{Nic87}). However, since we will not be working with a variational characterisation of $\rho$, the proof is completely different, and relies on comparing the respective mean distance functions $\rho_\Graph$ and $\rho_{\widetilde\Graph}$ in a suitable way (but cf.\ Remark~\ref{rem:no-variation}). The second, the key to the upper bound in \eqref{eq:bounds-1}, is a somewhat generalised version of the principle of ``unfolding pendant edges'', see \cite[Theorem~3.18(4)]{BerKenKur19}, but with rather different hypotheses on the pendant graph being ``unfolded'' (see also Figure~\ref{fig:unfolding-edges}).

\begin{theorem}[Surgery principles for mean distance]
\label{thm:surgery}
Let $\Graph$ be a compact metric graph.
\begin{enumerate}
\item Suppose $\widetilde\Graph$ is formed from $\Graph$ by cutting through a vertex (see \cite[Section~3.1]{BerKenKur19}). %Then $\rho (\widetilde\Graph) \geq \rho (\Graph)$, with equality if and only if the cut was trivial, i.e., $\widetilde\Graph = \Graph$.
Then $\rho (\widetilde\Graph) > \rho (\Graph)$ unless $\widetilde\Graph = \Graph$ (i.e., the cut is the trivial cut, where the cut vertex is unchanged).
%\item Suppose $\widetilde\Graph$ is formed from $\Graph$ by (strictly) lengthening an edge. Then $\rho(\widetilde\Graph) > \rho (\Graph)$.
\item Suppose $\HGraph$ is a pendant subgraph of $\Graph$, that is, $\HGraph \subset \Graph$ and there exists $\mv\in \mV(\Graph)$ such that $\partial\HGraph := \HGraph \cap \overline{\Graph \setminus \HGraph} = \{\mv\}$. Suppose $\widetilde\Graph$ is formed from $\Graph$ by replacing $\HGraph$ with another graph $\widetilde \HGraph$ in such a way that $\widetilde\Graph \setminus \widetilde \HGraph = \Graph \setminus \HGraph$. If the graphs $\HGraph$ and $\widetilde \HGraph$ satisfy, cumulatively,
\begin{enumerate}
\item $|\HGraph| = |\widetilde \HGraph|$,
\item $\rho_{\widetilde \HGraph}(\mv) \geq \rho_\HGraph (\mv)$, and
\item $\rho(\widetilde \HGraph) \geq \rho(\HGraph)$,
\end{enumerate}
then $\rho (\widetilde\Graph) \geq \rho (\Graph)$. The inequality is strict if $\rho_{\widetilde \HGraph} (\mv) > \rho_\HGraph (\mv)$, or if $\rho (\widetilde \HGraph) > \rho (\HGraph)$.
\end{enumerate}
\end{theorem}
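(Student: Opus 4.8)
\emph{Part (1): cutting through a vertex.}
The plan is to reduce the global statement about $\rho$ to a pointwise comparison of the distance functions. When we cut through a vertex $\mv$ of $\Graph$ to form $\widetilde\Graph$, the two graphs share the same underlying set of edges and the same total length $|\widetilde\Graph| = |\Graph| = L$, so there is a canonical measure-preserving bijection identifying points of $\Graph$ with points of $\widetilde\Graph$. The key observation is that cutting can only destroy paths, never create them: any path in $\widetilde\Graph$ between (the images of) $x$ and $y$ corresponds to a path in $\Graph$ of the same length, so $\dist_\Graph(x,y) \leq \dist_{\widetilde\Graph}(x,y)$ for every pair $x,y$. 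First I would establish this pointwise inequality carefully, treating the cut as splitting $\mv$ into two or more vertices and checking that shortest paths in $\widetilde\Graph$ project to (not necessarily shortest) paths in $\Graph$. Integrating this inequality twice over $\Graph \times \Graph$ against $\dx\,\dy$ and dividing by $L^2$ gives $\rho(\Graph) \leq \rho(\widetilde\Graph)$ immediately. For the strictness claim, I would argue that if the cut is nontrivial then there exist two edges meeting at $\mv$ that end up in different components through $\mv$, so that a positive-measure set of pairs $(x,y)$ has a strictly longer geodesic in $\widetilde\Graph$ (one must detour around the severed vertex); since $\dist_\Graph \le \dist_{\widetilde\Graph}$ everywhere with strict inequality on a set of positive measure, the integrated inequality is strict. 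The main subtlety here is making the ``positive-measure set of strictly separated pairs'' argument rigorous, i.e. ruling out the degenerate possibility that connectivity through $\mv$ was entirely redundant.

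\emph{Part (2): replacing a pendant subgraph.}
The strategy is to decompose the double integral defining $\rho(\widetilde\Graph)$ according to whether the two integration variables lie in the pendant part ($\widetilde\HGraph$) or in the common core $\Graph_0 := \Graph \setminus \HGraph = \widetilde\Graph \setminus \widetilde\HGraph$, and compare term by term with the corresponding decomposition for $\rho(\Graph)$. Writing $L = |\Graph| = |\widetilde\Graph|$ (equal by hypothesis~(a)), I would split $L^2 \rho = \int_\Graph \int_\Graph \dist$ into four blocks: (core, core), (pendant, pendant), and the two mixed (core, pendant) blocks. Because $\mv$ is the unique boundary point, every geodesic from a point $x \in \Graph_0$ to a point $y \in \HGraph$ passes through $\mv$, so $\dist_\Graph(x,y) = \dist_\Graph(x,\mv) + \dist_\HGraph(\mv, y)$; the analogous factorisation holds in $\widetilde\Graph$. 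The (core,core) block is identical for both graphs, since $\Graph_0$ and its metric are unchanged by the surgery (distances within $\Graph_0$ do not route through the pendant). The (pendant, pendant) block equals $|\HGraph|^2 \rho(\HGraph)$ versus $|\widetilde\HGraph|^2 \rho(\widetilde\HGraph)$, which by hypotheses~(a) and~(c) satisfies the desired inequality. For each mixed block, integrating the geodesic decomposition gives a term of the form $\bigl(\int_{\Graph_0} \dist(x,\mv)\,\dx\bigr)\cdot|\HGraph| + |\Graph_0|\cdot\int_\HGraph \dist(\mv,y)\,\dy$; the first summand is unchanged under the surgery (again using~(a)), while the second is $|\Graph_0| \cdot |\HGraph| \cdot \rho_\HGraph(\mv)$, which by hypothesis~(b) does not decrease when $\HGraph$ is replaced by $\widetilde\HGraph$. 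Assembling the three comparisons yields $\rho(\widetilde\Graph) \geq \rho(\Graph)$.

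\emph{Strictness and the main obstacle.}
For the strictness statement, I would track which inequalities above are strict: a strict inequality in~(b) makes both mixed blocks strictly larger (provided $|\Graph_0| > 0$ and $|\HGraph| > 0$, which hold since $\Graph$ is connected with at least the pendant attached), while a strict inequality in~(c) makes the (pendant, pendant) block strictly larger; either alone suffices. The step I expect to require the most care is the clean factorisation $\dist_\Graph(x,y) = \dist_{\Graph_0}(x,\mv) + \dist_\HGraph(\mv,y)$ and its unchangedness under surgery: I must verify that shortest paths between the core and the pendant genuinely funnel through the single cut vertex $\mv$ (this is exactly where $\partial\HGraph = \{\mv\}$ is used) and that no shortcut through the pendant can shorten a core-to-core distance, so that $\dist_{\Graph_0}$ agrees with the restriction of $\dist_\Graph$. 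Once this localisation of the metric is established, the rest is bookkeeping of the four integral blocks using hypotheses~(a)--(c).
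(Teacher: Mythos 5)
You follow the paper's proof almost line for line: part (1) rests on the same pointwise inequality $\dist_\Graph(x,y)\le\dist_{\widetilde\Graph}(x,y)$ integrated over $\Graph\times\Graph$, and part (2) on the same block decomposition (core--core, mixed, pendant--pendant) with hypotheses (a)--(c) applied to exactly the same terms, including the factorisation of core-to-pendant distances through $\mv$. The subtlety you flag in (1) is settled in the paper by a short quantitative argument: taking $\varepsilon$ smaller than a third of the minimum edge length, any $x,y$ within $\varepsilon$ of the cut vertex on edges $e_1\sim\mv_1$, $e_2\sim\mv_2$ with $\mv_1\neq\mv_2$ satisfy $\dist_\Graph(x,y)\le 2\varepsilon$, while a shortest path in $\widetilde\Graph$ either traverses an entire third edge (length $\ge 3\varepsilon$) or runs through the far endpoints of $e_1\cup e_2$ (length $\ge 4\varepsilon$), so the strict inequality holds on a set of pairs of measure $\varepsilon^2$.
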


\begin{example}
\label{ex:unfolding}
Suppose $\HGraph = \widetilde \HGraph = e_1 \cup e_2$, but the point of attachment to $\Graph \setminus \HGraph = \widetilde\Graph \setminus \widetilde \HGraph$ is changed, as depicted in Figure~\ref{fig:unfolding-edges}:
\begin{figure}[ht]
\centering
\begin{tikzpicture}[scale=0.9]
\draw[thick] (-1.5,0) -- (1.5,0);
\draw[thick] (-0.75,1.3) -- (0.75,-1.3);
\draw[thick] (-0.75,-1.3) -- (0.75,1.3);
\draw[thick,blue] (-1.5,0) -- (0,0);
\draw[thick,blue] (-0.75,1.3) -- (0,0);
\draw[fill,blue] (-1.5,0) circle (1.5pt);
\draw[fill,blue] (0,0) circle (1.5pt);
\draw[fill] (1.5,0) circle (1.5pt);
\draw[fill,blue] (-0.75,1.3) circle (1.5pt);
\draw[fill] (-0.75,-1.3) circle (1.5pt);
\draw[fill] (0.75,1.3) circle (1.5pt);
\draw[fill] (0.75,-1.3) circle (1.5pt);
\node at (-1.5,0) [anchor=north east] {{$e_1$}};
\node at (-0.75,1.3) [anchor=north east] {{$e_2$}};
\node at (-1.8,0.9) [anchor=east] {{$\HGraph$}};
\draw[-{Stealth[scale=0.5,angle'=60]},line width=2.5pt] (2.5,0) -- (3.5,0);
\draw[thick] (6.25,-1.3) -- (7.75,1.3);
\draw[thick] (7.75,-1.3) -- (7,0) -- (8.5,0);
\draw[thick,blue] (4.4,1.5) -- (7,0);
\draw[fill,blue] (7,0) circle (1.5pt);
\draw[fill,blue] (4.4,1.5) circle (1.5pt);
\draw[fill,blue] (5.7,0.75) circle (1.5pt);
\node at (5.7,0.65) [anchor=north east] {{$\widetilde\HGraph$}};
%\node at (5.05,1.025) [anchor=south] {{$e_1$}};
%\node at (6.35,0.375) [anchor=south] {{$e_2$}};
\node at (5.25,1) [anchor=south] {{$e_1$}};
\node at (6.45,0.35) [anchor=south] {{$e_2$}};
\draw[fill] (6.25,-1.3) circle (1.5pt);
\draw[fill] (7.75,-1.3) circle (1.5pt);
\draw[fill] (7.75,1.3) circle (1.5pt);
\draw[fill] (8.5,0) circle (1.5pt);
\end{tikzpicture}
\caption{``Unfolding pendant edges'': replacing the configuration of $e_1 \cup e_2$ in $\HGraph$ by the one in $\widetilde\HGraph$ increases $\rho$.}
\label{fig:unfolding-edges}
\end{figure}
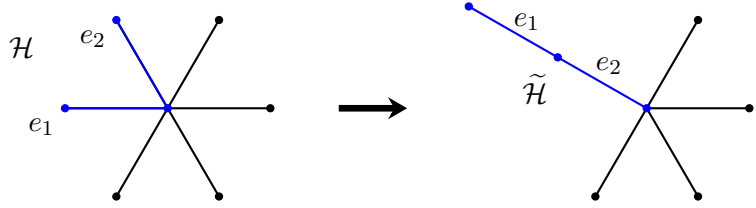
Then both subgraphs may be identified with some interval $[0,\ell]$, $\ell>0$, with $\rho_\HGraph(x)=\rho_{\widetilde{\HGraph}}=\frac{x^2}{\ell}-x+\frac{\ell}{2}$ (see \eqref{eq:rho-interval}). In particular, since $\frac{\ell}{2} > \frac{x^2}{\ell}-x+\frac{\ell}{2}$ for any $x \in (0,\ell)$, and since in $\widetilde{\HGraph}$ we are taking the point of attachment as $\mv\simeq 0$ (whereas in $\HGraph$ the point of attachment will be somewhere in $(0,\ell)$), it follows that $\rho_{\widetilde{\HGraph}}(\mv) = \frac{\ell}{2} > \rho_\HGraph (\mv)$ as long as $|e_1|, |e_2| \neq 0$. We conclude from Theorem~\ref{thm:surgery}(2) that in this case $\rho(\widetilde{\Graph})>\rho(\Graph)$.
\end{example}

{
More difficult is the issue of lengthening an edge, that is, if $\widetilde\Graph$ is formed from $\Graph$ by increasing the length of a given edge $e$ (and thus, equally, the total length of the graph), holding the graph topology and all other edge lengths constant, do we have $\rho (\widetilde\Graph) \geq \rho (\Graph)$? A corresponding statement holds for the standard Laplacian eigenvalues (see \cite[Corollary~3.12(1)]{BerKenKur19}). In the related case of \emph{attaching a pendant graph} at a vertex (see \cite[Definition~3.9]{BerKenKur19}), if one attaches a sufficiently short pendant edge at a vertex $\mv$, then $\rho$ will actually decrease as long as $\rho_\Graph (\mv) < \rho (\Graph)$. This also provides an example whereby lengthening a (very short) edge will lower $\rho$ (our thanks go to Noah Kravitz for pointing out this principle to us).

\begin{proposition}
\label{prop:pendant-derivative}
Suppose the family of graphs $\Graph_\ell$ is formed from $\Graph$ by attaching a pendant edge of length $\ell$ to $\Graph$ at a vertex $\mv$. Then the function $\ell \mapsto \rho (\Graph_\ell)$ is a $C^1$-function of $\ell \geq 0$, and
\begin{displaymath}
	\frac{\d}{\d \ell} \rho (\Graph_\ell)\Big|_{\ell = 0} = \frac{2}{|\Graph|}\big[ \rho_\Graph (\mv) - \rho(\Graph)\big].
\end{displaymath}
\end{proposition}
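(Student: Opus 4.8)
The plan is to compute $\rho(\Graph_\ell)$ explicitly as a function of $\ell$ by splitting the double integral defining mean distance according to whether each of the two integration variables lies in the original graph $\Graph$ or on the new pendant edge. Write $\Graph_\ell = \Graph \cup e_\ell$ where $e_\ell$ is the pendant edge, which I identify with the interval $[0,\ell]$ with the endpoint $0$ attached at the vertex $\mv$. The key geometric fact is that for any point $z \in e_\ell$ at parameter $s \in [0,\ell]$ and any point $y \in \Graph$, the distance is simply $\dist_{\Graph_\ell}(z,y) = s + \dist_\Graph(\mv,y)$, since every shortest path from a point on the pendant edge to the rest of the graph must pass through $\mv$; similarly distances between two points on $e_\ell$ are just $|s-t|$. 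These identities let me write everything in terms of quantities living on $\Graph$ alone, namely $\rho_\Graph(\mv)$, $\rho(\Graph)$, and $|\Graph|$.

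First I would set $L := |\Graph|$ so that $|\Graph_\ell| = L + \ell$, and decompose the unnormalised double integral $\iint_{\Graph_\ell}\dist\,\dy\,\dx$ into four pieces: (i) both variables in $\Graph$, giving $L^2 \rho(\Graph)$; (ii) both variables on $e_\ell$, giving $\int_0^\ell\int_0^\ell |s-t|\,\d s\,\d t = \ell^3/3$; and (iii),(iv) one variable in each, each giving $\int_0^\ell\int_\Graph (s + \dist_\Graph(\mv,y))\,\dy\,\d s = \tfrac{\ell^2}{2}L + \ell\, L\,\rho_\Graph(\mv)$ (using that $\int_\Graph \dist_\Graph(\mv,y)\,\dy = L\,\rho_\Graph(\mv)$), and there are two such cross terms. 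Collecting,
\begin{displaymath}
\rho(\Graph_\ell) = \frac{L^2\rho(\Graph) + \ell^2 L + 2\ell L\,\rho_\Graph(\mv) + \tfrac{1}{3}\ell^3}{(L+\ell)^2}.
\end{displaymath}
This exhibits $\rho(\Graph_\ell)$ as a genuine $C^1$ (indeed smooth) function of $\ell \geq 0$, since the denominator never vanishes for $L > 0$; this handles the regularity claim at once.

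It then remains to differentiate and evaluate at $\ell = 0$. Writing the numerator as $N(\ell)$ and denominator as $M(\ell) = (L+\ell)^2$, I have $N(0) = L^2\rho(\Graph)$, $N'(0) = 2L\,\rho_\Graph(\mv)$, $M(0) = L^2$, and $M'(0) = 2L$. The quotient rule then gives
\begin{displaymath}
\frac{\d}{\d\ell}\rho(\Graph_\ell)\Big|_{\ell=0} = \frac{N'(0)M(0) - N(0)M'(0)}{M(0)^2} = \frac{2L\,\rho_\Graph(\mv)\cdot L^2 - L^2\rho(\Graph)\cdot 2L}{L^4} = \frac{2}{L}\big[\rho_\Graph(\mv) - \rho(\Graph)\big],
\end{displaymath}
which is exactly the asserted formula since $L = |\Graph|$.

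The only genuinely delicate point — and where I would be careful rather than where I expect real difficulty — is justifying the distance identity $\dist_{\Graph_\ell}(z,y) = s + \dist_\Graph(\mv,y)$ and, implicitly, that attaching the pendant edge does not alter distances \emph{within} $\Graph$ itself (i.e. $\dist_{\Graph_\ell}$ restricted to $\Graph \times \Graph$ equals $\dist_\Graph$). Both follow from the pendant structure: a pendant edge meets the rest of the graph only at $\mv$, so no shortest path between points of $\Graph$ can gain by detouring onto $e_\ell$, and any path leaving the pendant edge is forced through $\mv$. This is essentially the same elementary observation underpinning the pendant-subgraph computations in the proof of Lemma~\ref{lem:flower} and Theorem~\ref{thm:surgery}(2), so I would state it briefly and move on. Everything else is a routine polynomial computation.
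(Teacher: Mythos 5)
Your proof is correct and takes essentially the same approach as the paper's: both decompose the double integral over $\Graph_\ell$ into the $\Graph\times\Graph$, cross, and $e_\ell\times e_\ell$ pieces, use the pendant distance identity $\dist_{\Graph_\ell}(z,y)=s+\dist_\Graph(\mv,y)$, and finish with the quotient rule against $|\Graph_\ell|^2=(L+\ell)^2$. Your explicit closed-form expression for $\rho(\Graph_\ell)$ as a rational function of $\ell$ makes the claimed $C^1$ (indeed smooth) regularity even more transparent than the paper's limit computation at $\ell=0$.
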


Note that this result immediately applies, \emph{mutatis mutandis}, if we lengthen any already existing pendant edge of $\Graph$: we obtain a ``Hadamard-type'' formula for the derivative at $\ell = \ell_0$ given by
\begin{displaymath}
	\frac{\d}{\d \ell} \rho (\Graph_\ell)\Big|_{\ell = \ell_0} = \frac{2}{|\Graph|}\big[ \rho_\Graph (\mv_0) - \rho(\Graph)\big],
\end{displaymath}
where $\mv_0$ is the degree-one vertex at the end of the edge. This can be obtained as an immediate consequence of Proposition~\ref{prop:pendant-derivative} by attaching a short edge to $\mv_0$ (turning the latter into a dummy vertex), or else by making a trivial adaptation of the proof of the proposition.
}

\begin{proof}[Proof of Theorem~\ref{thm:surgery}]
    In this proof it will be important to distinguish the distance functions on different graphs; thus we will write $\dist_{\Graph}$ for the distance function on $\Graph$, and so on.
    
    (1) Fix any two $x,y \in \widetilde\Graph$ which are not in the image of the cut vertex in $\Graph$, and recall that by definition
%    $$\dist_{\widetilde{\Graph}} (x,y)=\inf\{|\mathcal{P}|: \mathcal{P} \in C([0,1],\widetilde{\Graph}): \mathcal{P}(0)=x ,\, \mathcal{P}(1)=y\}$$
    $\dist_{\widetilde\Graph}(x,y)$ is the infimum of the lengths of all paths between $x$ and $y$ in $\widetilde\Graph$. Canonically identifying $x$ and $y$ with points in $\Graph$, we see that the set of paths between $x$ and $y$ in $\Graph$ can only be larger than its counterpart in $\widetilde\Graph$; thus the infimum is lower,
    \begin{displaymath}
        \dist_\Graph (x,y) \leq \dist_{\widetilde{\Graph}} (x,y).
    \end{displaymath}
    Since this holds for all $x,y$ outside a finite set, the desired inequality for $\rho$ follows from the definition.
%    $$\rho(\Graph)=\frac{1}{|\Graph|^2} \int_\Graph \int_\Graph \dist_\Graph (x,y) \,\dx\,\dy \leq \frac{1}{|\Graph|^2} \int_\Graph \int_\Graph \dist_{\widetilde{\Graph}}(x,y) \,\dx \,\dy=\rho(\widetilde{\Graph}).$$

    For the strict inequality, suppose $\mv \in \Graph$ is the vertex being cut, and $\mv_1,\mv_2 \in \widetilde\Graph$ are two distinct vertices obtained from the cut. Take $\varepsilon>0$ to be less than a third of the minimum edge length in $\Graph$ (and thus in $\widetilde\Graph$ as well). There will necessarily exist edges $e_1 \sim \mv_1$ and $e_2 \sim \mv_2$ in $\widetilde\Graph$, which are adjacent at $\mv$ in $\Graph$, but not at $\mv_1 \neq \mv_2$ in $\widetilde\Graph$.     Then, in $\Graph$, for all $x \in e_1 \cap B_\varepsilon (\mv)$ and $y \in e_2 \cap B_\varepsilon(\mv)$, $\dist_\Graph (x,y) \leq 2\varepsilon$.
    
    Now consider the shortest path from $x$ to $y$ in $\widetilde\Graph$. There are two possibilities: either it runs through an entire edge of $\widetilde\Graph$, in which case it must have length at least $3\varepsilon$, or it is entirely contained in the union of $e_1$ and $e_2$. Since $e_1$ and $e_2$ are not adjacent at $\mv_1 \neq \mv_2$, in this case they must be adjacent at their respective other endpoints, say at the common vertex $w \in \widetilde\Graph$. Since $|e_1|,|e_2| \geq 3\varepsilon$ and $\dist_{\widetilde\Graph}(x,\mv_1) < \varepsilon$, $\dist_{\widetilde\Graph}(y,\mv_2) < \varepsilon$, it follows that $\dist_{\widetilde{\Graph}}(x,w), \dist_{\widetilde{\Graph}}(y,w) \geq 2\varepsilon$, and thus $\dist_{\widetilde{\Graph}} (x,y) \geq 4\varepsilon$ in this case.

    At any rate, we obtain that $\dist_{\Graph} (x,y) < 2\varepsilon$, while $\dist_{\widetilde{\Graph}} \geq 3\varepsilon$, for all such $x,y$. Integrating over all $x \in e_1 \cap B_\varepsilon (\mv)$ and $y \in e_2 \cap B_\varepsilon (\mv)$ (which each form a set of measure $\varepsilon$) leads to the strict inequality
    \begin{displaymath}
        \int_\Graph \int_\Graph \dist_\Graph (x,y) \,\dx\,\dy < \int_\Graph \int_\Graph \dist_{\widetilde{\Graph}}(x,y) \,\dx\,\dy,
    \end{displaymath}
    and thus $\rho (\Graph) < \rho (\widetilde\Graph)$.

    (2) Note that $|\Graph|=|\widetilde{\Graph}|$, so it suffices to prove that
    \begin{displaymath}
        \int_{\widetilde{\Graph}}\int_{\widetilde{\Graph}} \dist_{\widetilde{\Graph}}(x,y) \,\dx\,\dy \geq \int_\Graph\int_\Graph \dist_{\Graph}(x,y)\, \dx\,\dy.
    \end{displaymath}
    We divide the former integral into three parts, each of which will be analysed separately:
    \begin{displaymath}
        \int_{\widetilde{\Graph}}\int_{\widetilde{\Graph}} \dist_{\widetilde{\Graph}}(x,y)\, \dx \, \dy = \Big( \underbrace{\int_{\widetilde{\Graph} \setminus \widetilde{\HGraph}}\int_{\widetilde{\Graph} \setminus \widetilde{\HGraph}}}_{({\rm i})} + 2\underbrace{\int_{\widetilde{\Graph} \setminus \widetilde{\HGraph}}\int_{\widetilde{\HGraph}}}_{({\rm ii})}+\underbrace{\int_{\widetilde{\HGraph}}\int_{\widetilde{\HGraph}}}_{({\rm iii})}\Big) \dist_{\widetilde{\Graph}}(x,y)\, \dx\, \dy.
    \end{displaymath}
    (i) For the first integral, since for all $x,y \in \widetilde{\Graph} \setminus \widetilde{\HGraph} \simeq \Graph \setminus \HGraph$, the shortest path connecting $x,y$ does not run through the pendant $\widetilde{\HGraph}$ (or $\HGraph$, respectively), with the possible exception of $\mv$,
    \begin{displaymath}
        \dist_{\widetilde{\Graph}}(x,y) = \dist_{\widetilde{\Graph} \setminus \widetilde{\HGraph}}(x,y) = \dist_{\Graph \setminus \HGraph}(x,y)=\dist_\Graph (x,y)
    \end{displaymath}
    for all $x,y \in \widetilde{\Graph} \setminus \widetilde{\HGraph} \simeq \Graph \setminus \HGraph$,
    whence
    \begin{displaymath}
        \int_{\widetilde{\Graph} \setminus \widetilde{\HGraph}}\int_{\widetilde{\Graph} \setminus \widetilde{\HGraph}} \dist_{\widetilde{\Graph}}(x,y)\, \dx \,\dy = \int_{\Graph \setminus \HGraph}\int_{\Graph \setminus \HGraph} \dist_{\Graph}(x,y)\, \dx\,\dy.
    \end{displaymath}
    (ii) For the second term, we are considering points of the form $x \in \widetilde{\Graph} \setminus \widetilde{\HGraph}$, $y \in \widetilde{\HGraph}$; since $\widetilde{\HGraph}$ is a pendant attached at $\mv$, in this case
    \begin{displaymath}
        \dist_{\widetilde{\Graph}}(x,y)=\dist_{\widetilde{\Graph} \setminus \widetilde{\HGraph}}(x,\mv)+\dist_{\widetilde{\HGraph}}(\mv,y)
    \end{displaymath}
    (and similarly for $\HGraph$). Noting that the distance functions in $\Graph \setminus \HGraph$ and $\widetilde\Graph \setminus \widetilde\HGraph$ coincide, we deduce that
    \begin{displaymath}
        \int_{\widetilde{\Graph} \setminus \widetilde{\HGraph}}\int_{\widetilde{\HGraph}} \dist_{\widetilde{\Graph}}(x,y) \,\dy\,\dx = \underbrace{\int_{\widetilde{\Graph} \setminus \widetilde{\HGraph}}\dist_{\widetilde{\Graph} \setminus \widetilde{\HGraph}}(x,\mv)\cdot|\widetilde{\HGraph}|\,\dx}_{=:I}+\underbrace{\int_{\widetilde{\HGraph}} \dist_{\widetilde{\Graph}}(\mv,y)\cdot|\widetilde{\Graph} \setminus \widetilde{\HGraph}|\,\dy}_{=: J}.
    \end{displaymath}
    Using the assumption (a), we may rewrite $I$ as
    \begin{displaymath}
        I = \int_{\Graph \setminus \HGraph}\dist_{\Graph \setminus \HGraph}(x,\mv)\cdot|\HGraph|\,\dx=\int_{\Graph \setminus \HGraph} \int_\HGraph \dist_{\Graph \setminus \HGraph}(x,\mv) \,\dy\,\dx
    \end{displaymath}
    while for $J$ we have, by (a) and (b),
    \begin{displaymath}
    \begin{aligned}    
        J = |\widetilde{\Graph} \setminus \widetilde{\HGraph}|\cdot \int_{\widetilde{\HGraph}}\dist_{\widetilde{\HGraph}}(\mv,y)\,\dy &=|\widetilde{\Graph} \setminus \widetilde{\HGraph}|\cdot |\widetilde{\HGraph}|\rho_{\widetilde{\HGraph}}(\mv) \geq |\Graph \setminus \HGraph|\cdot|\HGraph|\rho_{\HGraph} (\mv) \\
        &=|\Graph \setminus \HGraph|\int_{\HGraph} \dist_{\HGraph} (\mv,y)\,\dy \\
        &= \int_{\Graph \setminus \HGraph} \int_{\HGraph} \dist_{\Graph} (\mv,y) \,\dy\, \dx
    \end{aligned}
    \end{displaymath}
    Putting these two estimates together, we obtain
    \begin{displaymath}
    \begin{aligned}
    \int_{\widetilde{\Graph} \setminus \widetilde{\HGraph}}\int_{\widetilde{\HGraph}} \dist_{\widetilde{\Graph}}(x,y) \,\dy\, \dx &\geq \int_{\Graph \setminus \HGraph} \int_{\HGraph} \dist_{\Graph} (x,\mv)+\dist_{\Graph} (\mv,y) \,\dy \,\dx\\
    &=\int_{\Graph \setminus \HGraph} \int_{\HGraph} \dist_{\Graph} (x,y) \,\dy \,\dx.
    \end{aligned}
    \end{displaymath}
    (iii) Since by (c) we have $\rho(\widetilde{\HGraph})\geq \rho(\HGraph)$, by (a), $|\HGraph|=|\widetilde{\HGraph}|\,(\geq 0)$, and by assumption on $\widetilde{\HGraph}$ and $\HGraph$ as pendants, $\dist_{\widetilde{\Graph}}(x,y) = \dist_{\widetilde{\HGraph}} (x,y)$ for all $x,y \in \widetilde{\HGraph}$ and $\dist_{\Graph} (x,y) = \dist_{\HGraph} (x,y)$ for all $x,y, \in \HGraph$, it is immediate that
    \begin{displaymath}
        \int_{\widetilde{\HGraph}}\int_{\widetilde{\HGraph}}\dist_{\widetilde{\Graph}}(x,y) \,\dx \,\dy \geq \int_{\HGraph}\int_{\HGraph} \dist_{\Graph} (x,y) \,\dx \,\dy.
    \end{displaymath}
    Finally, putting (i), (ii) and (iii) together yields
    \begin{displaymath}
    \begin{aligned}
        \int_{\widetilde{\Graph}}\int_{\widetilde{\Graph} }\dist_{\widetilde{\Graph}}(x,y) \,\dx \,\dy &= \left( \int_{\widetilde{\Graph} \setminus \widetilde{\HGraph}}\int_{\widetilde{\Graph} \setminus \widetilde{\HGraph}} + 2\int_{\widetilde{\Graph} \setminus \widetilde{\HGraph}}\int_{\widetilde{\HGraph}}+\int_{\widetilde{\HGraph}}\int_{\widetilde{\HGraph}}\right) \dist_{\widetilde{\Graph}}(x,y) \,\dx \,\dy \\
        & \geq \left( \int_{\Graph \setminus \HGraph}\int_{\Graph \setminus \HGraph} + 2\int_{\Graph \setminus \HGraph} \int_{\HGraph} + \int_{\HGraph}\int_{\HGraph} \right) \dist_{\Graph}(x,y) \,\dx \,\dy \\
        & = \int_{\Graph} \int_{\Graph} \dist_{\Graph}(x,y) \,\dy \,\dx.
    \end{aligned}
    \end{displaymath}
    For the strict inequality, we note that if $\rho_{\widetilde{\HGraph}}(\mv) > \rho_\HGraph (\mv)$, then the inequality in (ii) is strict, while if $\rho(\widetilde{\HGraph}) > \rho(\HGraph)$, then the inequality in (iii) is strict. In either case, we then have $\rho(\widetilde{\Graph}) > \rho(\Graph)$.
\end{proof}

We can now prove the bounds \eqref{eq:bounds-1}, including the statements about equality.

\begin{proof}[Proof of \eqref{eq:bounds-1}]
For the lower bound, given a graph $\Graph$ which is not already a flower, by Theorem~\ref{thm:surgery}(1) there exists a flower $F$ with the same number of edges, obtained by gluing all vertices of $\Graph$ together, such that $\rho (\Graph) > \rho (F)$. The claim now follows immediately from Lemma~\ref{lem:flower}.

For the upper bound, if $\Graph$ is not already a tree, then by Theorem~\ref{thm:surgery}(1) there exists a tree $T$ having the same total length such that $\rho(T) > \rho(\Graph)$. If $T$ is not already a path graph, we successively apply Example~\ref{ex:unfolding} to pairs of adjacent pendant edges $e_1$, $e_2$, that is, edges $e_1$ and $e_2$ in $T$ sharing a common vertex $\mv$, such that their other vertex is a leaf. After a finite number of steps $T$ is transformed into an interval $I$ of the same total length as $T$ and hence as $\Graph$, with $\rho(I) > \rho(T)$.

To conclude, suppose $\Graph$ is any graph which is not a path graph. If it is not a tree, then we are in the first case, and $\rho(\Graph) < \rho(T) \leq \rho (I)$. If it is a tree, then we are in the second case, and $\rho(\Graph) < \rho(I)$ still. This shows both the inequality, and the sharpness of the inequality at the same time.
%Since if $\Graph$ is not a path graph then it is either not a tree or it is a tree but not a path graph, it follows that in either case $\rho(I) > \rho(\Graph)$.
\end{proof}

{
We finish with the proof that the effect of attaching a pendant edge at a vertex $\mv$ depends on the relation between $\rho_\Graph (\mv)$ and $\rho(\Graph)$.

\begin{proof}[Proof of Proposition~\ref{prop:pendant-derivative}]
We will show that the function
\begin{equation}
\label{eq:ell-function}
	\ell \mapsto \int_{\Graph_\ell}\int_{\Graph_\ell} \dist_{\Graph_\ell} (x,y)\,\dx\,\dy
\end{equation}
is a $C^1$-function of $\ell \geq 0$, with derivative at $\ell = 0$ given by $2|\Graph|\rho_\Graph (\mv)$; the other assertions of the proposition then follow directly upon applying the quotient rule to the function
\begin{displaymath}
	\ell \mapsto \rho (\Graph_\ell) = \frac{1}{|\Graph_\ell|^2}\int_{\Graph_\ell}\int_{\Graph_\ell} \dist_{\Graph_\ell} (x,y)\,\dx\,\dy
\end{displaymath}
(and noting that $\ell \mapsto |\Graph_\ell|$ is a smooth function, with $\frac{\d}{\d \ell} |\Graph_\ell| = 1$ for all $\ell \geq 0$). Now, for fixed $\ell > 0$, denoting by $e = e(\ell)$ the pendant edge of length $\ell$ attached to $\mv$, we have
\begin{displaymath}
%\begin{aligned}
	\int_{\Graph_\ell}\int_{\Graph_\ell} \dist_{\Graph_\ell} (x,y)\,\dx\,\dy = \left(\int_\Graph \int_\Graph
	+ 2\int_\Graph \int_e + \int_e \int_e \right)\dist_{\Graph_\ell} (x,y)\,\dx\,\dy.
%\end{aligned}
\end{displaymath}
Using the fact that $e$ is a pendant (and, as a subgraph, identifiable with an interval), we obtain
\begin{displaymath}
	\int_e \int_e \dist_{\Graph_\ell} (x,y)\,\dx\,\dy = \frac{\ell^3}{3}
\end{displaymath}
(see Example~\ref{ex:star}, and in particular \eqref{eq:rho-interval}), as well as
\begin{displaymath}
	\int_\Graph \int_e \dist_{\Graph_\ell} (x,y)\,\dx\,\dy = \ell \cdot \big[\tfrac{\ell}{2} + \rho_\Graph (\mv)\big],
\end{displaymath}
since the mean distance from an arbitrarily chosen $x \in e$ to $\mv$ will be $\frac{\ell}{2}$, the mean distance from $\mv$ to an arbitrary $y \in \Graph$ is, by definition, $\rho_\Graph (\mv)$, and this mean over $\Graph$ needs to be integrated over the edge $e$ of length $\ell$ to obtain the total integral value. Putting these together, it follows that for any $\ell \geq 0$ the derivative of the function \eqref{eq:ell-function} exists and at zero is given by
\begin{displaymath}
	\lim_{\ell \to 0} \tfrac{1}{\ell} \left( 2\ell \cdot \big[\tfrac{\ell}{2} + \rho_\Graph (\mv)\big] + \tfrac{\ell^3}{3}\right) = 2|\Graph|\rho_\Graph (\mv),
\end{displaymath}
as claimed. The statement of the proposition now follows.
\end{proof}
}

\section{Symmetrisation and doubly connected graphs}
\label{sec:symmetrisation}

In this section we will prove the sharpened upper bound \eqref{eq:bounds-2} for doubly connected graphs, including the characterisation of equality. We recall that a graph $\Graph$ is said to be \emph{doubly (path) connected} if, for any $x,y \in \Graph$, there exist two edgewise disjoint paths $P_1$ and $P_2$ in $\Graph$ connecting $x$ and $y$ (although we allow the paths to intersect at a finite number of vertices); equivalently, no edge of $\Graph$ is a \emph{bridge} whose removal would disconnect $\Graph$.

Unlike for \eqref{eq:bounds-1}, in this case there is a natural proof via symmetrisation using the same basic tool, the \emph{coarea formula}, as Friedlander's symmetrisation-based proof of Nicaise' inequality \cite{Fri05} (see also \cite{BanLev17,BerKenKur17}, and Remark~\ref{rem:why-symmetrisation}). We recall that the coarea formula states that
\begin{equation}
\label{eq:coarea}
    \int_\Omega \varphi |\nabla \psi|\,{\rm d}x = \int_0^\infty \int_{\{x:\psi(x)=t\}}\varphi(x)\,{\rm d}\sigma\, {\rm d}t,
\end{equation}
valid for an integrable nonnegative function $\varphi$ on some domain $\Omega$ (which can be a metric graph) and an absolutely continuous, integrable nonnegative function $\psi$ (for metric graphs, see, e.g., \cite{Maz23} or \cite[Lemma~4.6]{DufKenMug22}).

\begin{lemma}
\label{lem:coarea-distance}
For any compact graph $\Graph$ of total length $L>0$ and any $x \in \Graph$, we have
\begin{displaymath}
	\rho_\Graph (x) = \frac{1}{L}\int_0^{\max_y \dist (x,y)} [\# \{y: \dist (x,y)=t\}]\cdot t\,{\rm d}t
\end{displaymath}
\end{lemma}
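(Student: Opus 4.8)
The plan is to apply the coarea formula \eqref{eq:coarea} directly, with the distance function from a fixed point playing the role of the level-set function $\psi$. Fix $x \in \Graph$ and define $\psi(y) := \dist(x,y)$ as a function of $y \in \Graph$. This function is nonnegative, bounded (since $\Graph$ is compact), and absolutely continuous on each edge; crucially, as the distance from a fixed point on a metric graph, it satisfies $|\psi'(y)| = 1$ for almost every $y \in \Graph$ (the shortest path varies at unit speed, and the set of points $y$ where $\psi$ fails to be differentiable, e.g.\ ``cut points'' equidistant from $x$ along two distinct shortest paths, is finite and hence of measure zero). Taking $\varphi \equiv 1$ in \eqref{eq:coarea} then gives
\begin{displaymath}
	\int_\Graph \dist(x,y)\,\dy = \int_\Graph \psi(y)\,|\psi'(y)|\,\dy = \int_0^\infty \int_{\{y : \psi(y) = t\}} \psi(y)\,{\rm d}\sigma\,{\rm d}t.
\end{displaymath}
Wait — I would apply the coarea formula not to $\psi$ itself but in the form that produces $\psi$ as the integrand weight: the cleanest route is to set $\varphi(y) = \dist(x,y)$ (which is the nonnegative integrable function) and use $|\psi'| = 1$ to replace the graph integral $\int_\Graph \varphi\,\dy = \int_\Graph \varphi\,|\psi'|\,\dy$, so the left-hand side is exactly $\int_\Graph \dist(x,y)\,\dy = L\,\rho_\Graph(x)$ by Definition~\ref{defi:meandist}.

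The right-hand side then simplifies because on each level set $\{y : \dist(x,y) = t\}$ the integrand $\varphi(y) = \dist(x,y)$ is constantly equal to $t$, and the surface measure ${\rm d}\sigma$ on a metric graph is counting measure on the (finite) set of points at a given distance. Hence
\begin{displaymath}
	\int_{\{y:\dist(x,y)=t\}} \dist(x,y)\,{\rm d}\sigma = t \cdot \#\{y : \dist(x,y) = t\},
\end{displaymath}
and the $t$-integral may be truncated at $\max_y \dist(x,y)$ since the level sets are empty beyond that value. Dividing by $L$ yields the claimed identity.

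The main obstacle, and the point requiring the most care, is justifying that $\psi(y) = \dist(x,y)$ is an admissible test function for the coarea formula and that $|\psi'| = 1$ almost everywhere. I would argue that on the interior of any edge, for $y$ near a point where a unique shortest path to $x$ exists, $\psi$ is affine with slope $\pm 1$; the only exceptional points are the finitely many vertices together with the finitely many cut points where two shortest paths of equal length meet, and these form a finite (measure-zero) set on which $\psi$ is merely continuous and piecewise affine. This guarantees both absolute continuity on each edge and the pointwise relation $|\psi'| = 1$ a.e., which is exactly what is needed to pass from $\int_\Graph \varphi\,\dy$ to $\int_\Graph \varphi\,|\nabla\psi|\,\dy$ and invoke \eqref{eq:coarea}. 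A secondary technical point is clarifying that for metric graphs the ``surface measure'' ${\rm d}\sigma$ appearing in the coarea formula is indeed the counting measure on the level set, so that the inner integral collapses to $t$ times the cardinality $\#\{y : \dist(x,y) = t\}$; I would note that this is the natural specialisation of \eqref{eq:coarea} to the one-dimensional edges comprising $\Graph$, consistent with the references \cite{Maz23} and \cite[Lemma~4.6]{DufKenMug22} cited for the formula.
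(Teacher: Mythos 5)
Your proposal is correct and is essentially identical to the paper's proof: both apply the coarea formula \eqref{eq:coarea} with $\varphi = \psi = \dist(x,\cdot)$, use that $|\psi'| = 1$ almost everywhere to identify the left-hand side with $L\,\rho_\Graph(x)$, and collapse the inner integral to $t \cdot \#\{y : \dist(x,y) = t\}$ before truncating at $\max_y \dist(x,y)$. The extra care you take in justifying Lipschitz continuity and $|\psi'|=1$ a.e.\ (finitely many vertices and cut points) is a fine elaboration of what the paper simply asserts.
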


\begin{proof}
We apply the coarea formula \eqref{eq:coarea} to the Lipschitz continuous function $\varphi = \psi = \dist (x,\cdot)$. Since its gradient has absolute value $1$ almost everywhere, this yields
\begin{displaymath}
\begin{aligned}
	\rho_{\Graph}(x) &= \frac{1}{L}\int_0^\infty \int_{\{y:\dist (x,y)=t\}} t\,{\rm d}\sigma {\rm d}t\\
	&= \frac{1}{L}\int_0^{\max_y \dist (x,y)} [\# \{y: \dist (x,y)=t\}]\cdot t\,{\rm d}t,
\end{aligned}
\end{displaymath}
as claimed.
\end{proof}

%\begin{example}
%For the loop (or circle) $\mathcal{C}$ of length $L$, by symmetry, we have
%\begin{displaymath}
%    \rho_{\mathcal{C}} (x) = \frac{1}{L}\int_0^{L/2} 2t\,{\rm d}t = \frac{L}{4}
%\end{displaymath}
%for any $x \in \mathcal{C}$. This leads to the known value of $\frac{L}{4}$ for the mean distance of the loop:
%\begin{displaymath}
%    \rho (\mathcal{C}) = \frac{1}{L}\int_{\mathcal{C}} \rho_{\mathcal{C}} (x)\,{\rm d}x
%    = \frac{1}{L} \cdot L \cdot \frac{L}{4} = \frac{L}{4}.
%\end{displaymath}
%The loop is the only graph with the property that $\rho_{\Graph}(x)$ is constant independent of $x$. Curiously, this property will play a central role in the symmetrisation argument.
%\end{example}

We recall (Example~\ref{ex:flower}) that for the loop (or circle) $\mathcal{C}$ of length $L$,
\begin{displaymath}
    \rho (\mathcal{C}) = \rho_{\mathcal{C}}(x) = \frac{L}{4};
\end{displaymath}
{ large classes of examples and some intuition suggest that loops should be the only graphs $\Graph$ with the property that $\rho_{\Graph}(x)$ is constant independent of $x$, although we will not attempt to prove that here. Curiously, precisely this independence} will play a central role in the symmetrisation argument.

\begin{lemma}
\label{lem:doubly-connected-distance}
Let $\Graph$ be a compact, doubly connected graph of length $L>0$. For each $x \in \Graph$,
\begin{displaymath}
	M(x):=\max_{y \in \Graph} \dist (x,y) \leq \frac{L}{2}.
\end{displaymath}
The inequality is strict for at least one $x_0 \in \Graph$ (and thus all $x$ in a small neighbourhood of $x_0$) if $\Graph$ is not a loop.
\end{lemma}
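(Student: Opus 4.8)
The plan is to prove both assertions directly from the definition of double connectivity, with no variational input. For the bound $M(x)\le L/2$, fix $x$ and an arbitrary $y\in\Graph$. Double connectivity furnishes two edgewise disjoint paths $P_1,P_2$ from $x$ to $y$; after discarding any repeated portions we may assume each $P_i$ is simple, so that $|P_1|+|P_2|=|P_1\cup P_2|\le|\Graph|=L$, precisely because the two paths share no edges. Since $\dist(x,y)\le|P_i|$ for each $i$, we obtain
\[
\dist(x,y)\le\min\{|P_1|,|P_2|\}\le\tfrac12\big(|P_1|+|P_2|\big)\le\tfrac{L}{2},
\]
and taking the maximum over $y$ gives $M(x)\le L/2$. (The maximum defining $M(x)$ is attained because $\dist(x,\cdot)$ is continuous on the compact space $\Graph$.)

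For the strict inequality I would first reduce to a combinatorial observation: a doubly connected graph has minimum degree at least $2$, since a vertex of degree $\le 1$ would make its incident edge a bridge; and if \emph{every} vertex had degree exactly $2$ the (connected) graph would be a single cycle, that is, metrically a loop. Hence, if $\Graph$ is not a loop there exists a vertex $v$ with $\deg(v)\ge 3$, and the choice to make is $x_0=v$.

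The heart of the argument is to show $M(v)<L/2$ for such a vertex. Suppose instead $M(v)=L/2$, attained at some $y_0$ (so $v\ne y_0$). Running the inequality chain above for the pair $(v,y_0)$, the fact that both ends equal $L/2$ forces \emph{equality throughout}: the two edgewise disjoint paths must satisfy $|P_1|=|P_2|=L/2=\dist(v,y_0)$ and $|P_1\cup P_2|=L$, the latter meaning $P_1\cup P_2=\Graph$. But since each $P_i$ now realises $\dist(v,y_0)$ it is a geodesic, hence a simple path, so it meets $v$ only at its initial endpoint and therefore contains exactly one edge incident to $v$. As $P_1$ and $P_2$ are edge-disjoint, $P_1\cup P_2$ contains precisely two edges incident to $v$, which contradicts $\deg(v)\ge 3$ together with $P_1\cup P_2=\Graph$. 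Thus $M(v)<L/2$, and we set $x_0=v$.

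The neighbourhood statement then follows from continuity: $M$ is $1$-Lipschitz, because $\dist(x,y)\le\dist(x,x')+\dist(x',y)$ yields $M(x)\le\dist(x,x')+M(x')$ and symmetrically, so $M(x)<L/2$ for all $x$ in a sufficiently small ball around $x_0$. The main obstacle is exactly this strict part, and the delicate point is the \emph{choice} of $x_0$: the figure-eight graph (two loops of length $L/2$ glued at a vertex) is doubly connected and is not a loop, yet it has interior points $x$ with $M(x)=L/2$, so one cannot take an arbitrary point. The rigidity of the equality case — that $M(x_0)=L/2$ would force all of $\Graph$ to be covered by two geodesics emanating from $x_0$, and hence touching only two edges at $x_0$ — is precisely what singles out a branch vertex of degree $\ge 3$ as the correct witness.
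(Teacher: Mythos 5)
Your proof is correct, and while the first half coincides with the paper's, your treatment of the strict inequality takes a genuinely different route. For the bound $M(x)\le L/2$ you argue exactly as the paper does: two edgewise disjoint paths satisfy $|P_1|+|P_2|\le L$, so the shorter one has length at most $L/2$. For strictness, the paper works in the contrapositive and globally: it assumes $M(x)=L/2$ for \emph{every} $x\in\Graph$, extracts for each $x$ a pair of edge-disjoint geodesics of length exactly $L/2$ whose union exhausts $\Graph$, and then simply asserts that ``the only possibility is that $\Graph$ is a loop''. You instead produce an explicit witness: a doubly connected non-loop graph must contain a vertex $\mv$ of degree at least $3$ (no bridges forces minimum degree $2$, and all degrees equal to $2$ gives a loop), and equality $M(\mv)=L/2$ would force two edge-disjoint geodesics from $\mv$ to cover all of $\Graph$ while occupying only two of the at least three directions emanating from $\mv$ --- a contradiction. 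This buys you something the paper's terser argument does not make explicit: strictness at \emph{every} branch vertex, plus the rigidity analysis (equality throughout the chain forces $|P_1|=|P_2|=L/2$ and $P_1\cup P_2=\Graph$) that the paper leaves implicit; your figure-eight example also correctly explains why the witness cannot be an arbitrary point, since non-loop graphs can have interior points with $M(x)=L/2$. One step worth tightening: a geodesic starting at $\mv$ need not contain \emph{whole} edges incident to $\mv$ (for instance if $y_0$ lies in the interior of its initial edge), so the clean formulation is that in a small ball around $\mv$ the union $P_1\cup P_2$ meets only the two initial branches, leaving points of a third incident edge arbitrarily close to $\mv$ uncovered; this yields the same contradiction with $P_1\cup P_2=\Graph$. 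Finally, your observation that $M$ is $1$-Lipschitz cleanly justifies the neighbourhood assertion, which the statement of the lemma only records parenthetically.
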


\begin{proof}
Fix $x,y\in\Graph$. Since $\Graph$ is doubly connected, there exist at least two paths $P_1$, $P_2$ in $\Graph$ from $x$ to $y$, which can intersect at at most a finite set of points. It follows that $|P_1|+|P_2| \leq L$ and hence at least one path has length at most $\frac{L}{2}$.

If $M(x)=\frac{L}{2}$ for all $x \in \Graph$, then for any $x \in \Graph$ there exists $y \in \Graph$ such that there are exactly two edgewise disjoint paths $P_1$ and $P_2$ from $x$ to $y$, each of length exactly $\frac{L}{2}$, whose union exhausts $\Graph$. The only possibility is that $\Graph$ is a loop.
\end{proof}

\begin{proof}[Proof of \eqref{eq:bounds-2}]
We will show that, for any $x \in \Graph$, we have
\begin{equation}
\label{eq:rho-pointwise-comparison-circle}
    \rho_\Graph (x) \leq \frac{L}{4},
\end{equation}
that is, the pointwise value of $\rho_\Graph$ is always below the constant value of $\rho_{\mathcal{C}}$ on $\mathcal{C}$. The inequality will follow immediately from \eqref{eq:rho-pointwise-comparison-circle}.

Fix $x \in \Graph$ and write
\begin{displaymath}
    \xi(t):= \xi_x (t) := \# \{y: \dist (x,y)=t\}
\end{displaymath}
for the size of the level set of the corresponding distance function, for any $0 \leq t \leq M := M(x) \leq \frac{L}{2}$. We extend $\xi(t)$ by zero to a function on $[0,\frac{L}{2}]$; then, by the coarea formula \eqref{eq:coarea},
\begin{displaymath}
    \int_0^\frac{L}{2} \xi(t)\,{\rm d}t = \int_0^M \xi(t)\,{\rm d}t = L.
\end{displaymath}
We claim that
\begin{equation}
\label{eq:symmetrisation-target}
    \int_0^{M} \xi (t) \cdot t\,{\rm d}t \leq \int_0^{\frac{L}{2}}2\cdot t\,{\rm d}t = \frac{L^2}{4}.
\end{equation}
To see this, note that, obviously,
\begin{displaymath}
    \int_0^\frac{L}{2} 2\,{\rm d}t = L = \int_0^\frac{L}{2} \xi(t)\,{\rm d}t,
\end{displaymath}
and so
\begin{displaymath}
    0 \leq \int_0^M \xi(t)-2\,{\rm d}t = -\int_M^\frac{L}{2} \xi(t)-2\,{\rm d}t = \int_M^\frac{L}{2} 2 - \xi(t)\,{\rm d}t;
\end{displaymath}
note that the first integral is necessarily nonnegative since the nonnegative functions $\xi$ and $2$ have the same $L^1$-norm on $[0,\frac{L}{2}]$, while $\xi$ is supported only on $[0,M] \subset [0,\frac{L}{2}]$. (The first integral is seen to be zero if and only if $M=\frac{L}{2}$.) It follows directly that
\begin{displaymath}
    \int_0^M (\xi(t)-2)t\,{\rm d}t \leq M\int_0^M \xi(t)-2\,{\rm d}t
    = M\int_M^{\frac{L}{2}} 2-\xi(t)\,{\rm d}t \leq \int_M^{\frac{L}{2}} (2-\xi(t))t\,{\rm d}t.
\end{displaymath}
Rearranging yields \eqref{eq:symmetrisation-target} and thus the pointwise comparison between $\rho_{\Graph}$ and $\rho_{\mathcal C}$. The inequality statement in the theorem now follows immediately from Lemma~\ref{lem:coarea-distance}, since
\begin{displaymath}
    \rho_\Graph (x) = \frac{1}{L} \int_0^M \xi (t)\cdot t\,{\rm d}t \leq \frac{1}{L} \int_0^{\frac{L}{2}}2\cdot t\,{\rm d}t = \frac{L}{4},
\end{displaymath}
which establishes \eqref{eq:rho-pointwise-comparison-circle} for arbitrary $x \in \Graph$.

Finally, if $\Graph$ is not a loop, then there exists some $x_0 \in \Graph$ for which $M(x_0) < \frac{L}{2}$. By a continuity argument, we can find $\delta,\varepsilon > 0$ such that $M(x) \leq \frac{L}{2} - \varepsilon$ %and
%\begin{displaymath}
%    \int_0^{M(x)} \xi_x (t) - 2\,{\rm d}t \geq \varepsilon
%\end{displaymath}
for all $x$ in a $\delta$-neighbourhood of $x_0$. For such $x$, we have
\begin{displaymath}
\begin{aligned}
    \int_0^{M(x)} (\xi_x(t)-2)t\,{\rm d}t &\leq M(x)\int_{M(x)}^{\frac{L}{2}} 2-\underbrace{\xi_x(t)}_{=0}\,{\rm d}t\\
    &= \int_{M(x)}^{\frac{L}{2}} (2-\xi_x(t))t\,{\rm d}t - \int_{M(x)}^{\frac{L}{2}} 2(t-M(x))\,{\rm d}t.
\end{aligned}
\end{displaymath}
Since
\begin{displaymath}
    \int_{M(x)}^{\frac{L}{2}} 2(t-M(x))\,{\rm d}t
    = \int_{0}^{\frac{L}{2}-M(x)} 2t\,{\rm d}t \geq \int_0^\varepsilon 2t\,{\rm d}t = \varepsilon^2,
\end{displaymath}
we can thus sharpen \eqref{eq:symmetrisation-target} to
\begin{displaymath}
        \int_0^{M(x)} \xi_x (t) \cdot t\,{\rm d}t \leq \int_0^\frac{L}{2} 2t\,{\rm d}t - \varepsilon^2 = \frac{L^2}{4}-\varepsilon^2
\end{displaymath}
and thus obtain
\begin{displaymath}
    \rho_\Graph (x) \leq \frac{L}{4} - \frac{\varepsilon^2}{L}
\end{displaymath}
for all $x$ in a $\delta$-neighbourhood of some $x_0 \in \Graph$; in particular, the total measure of all such $x \in \Graph$ is at least $2\delta$. It follows that the inequality $\rho(\Graph) \leq \frac{L}{4}$ must be strict.
\end{proof}

\begin{remark}
\label{rem:why-symmetrisation}
Curiously, it is not clear how this symmetrisation idea could be adapted to simply connected graphs and the interval: on the loop the mean distance function is constant, so we obtain a pointwise comparison between $\rho_\Graph (x)$ and the constant value $\rho_{\mathcal{C}}(x)=\frac{L}{4}$. To use such an argument compare an arbitrary graph with an interval (path graph) $I$, where $\rho_I(x)$ is \emph{not} constant (see \eqref{eq:rho-interval}), we would first need to find a way of associating, for each $x \in \Graph$, some unique $y_x \in I$ such that then $\rho_\Graph (x) \leq \rho_{I}(y_x)$. It is not at all clear how this should work.
\end{remark}

\section{A variational comparison method}
\label{sec:test}

This short section is devoted to the proof of Theorem~\ref{thm:direct-lower-bound}. We first fix any point, without loss of generality a vertex, $\mv\in \Graph$ such that
\begin{displaymath}
    \rho_\Graph (\mv) = \frac{1}{|\Graph|}\int_\Graph \dist (x,v) \dx = \rho (\Graph),
\end{displaymath}
whose existence follows directly from the continuity of the function $\rho_\Graph$ and the definition of $\rho (\Graph)$ as mean value of $\rho_\Graph$.

We define
\begin{equation}
\label{eq:dirichlet-one-point}
    \lambda_1 (\Graph, \mv):= \inf_{\substack{0 \neq f \in H^1(\Graph)\\ f(\mv)=0}} \frac{\int_\Graph |f'(x)|^2\,\dx}{\int_\Graph |f(x)|^2\,\dx}
\end{equation}
to be the first eigenvalue of the Laplacian on $\Graph$ with a Dirichlet condition at $\mv$ and standard conditions elsewhere (we allow that $\Graph \setminus \{\mv\}$ be disconnected; in this case the corresponding eigenfunctions may be supported on a proper subset of $\Graph$). Fix any $f \in H^1(\Graph)$ such that $f(\mv)=0$. For $x \in \Graph$ we denote by $P(\mv,x)$ any shortest path in $\Graph$ from $\mv$ to $x$, then from
\begin{displaymath}
    f(x) - f(\mv) = \int_{P(\mv,x)} f'(y) \,\dy
\end{displaymath}
and the Cauchy--Schwarz inequality it follows that
\begin{displaymath}
    |f(x)| \leq \left(\int_{P(\mv,x)} |f'(y)|^2\dy\right)^{1/2} \dist(\mv,x)^{1/2}.
\end{displaymath}
Replacing $P(\mv,x)$ with $\Graph$, taking squares, integrating over $x \in \Graph$ and infimising over $f$ yields
\begin{displaymath}
    1 \leq \lambda_1 (\Graph,\mv) \int_\Graph \dist(\mv,x)\,\dx
    = \lambda_1 (\Graph,\mv) \rho(\Graph) L.
\end{displaymath}
Theorem~\ref{thm:direct-lower-bound} now follows from the following eigenvalue comparison result.

\begin{lemma}
\label{lem:dirichlet-neumann-eigenvalue-comparison}    
Let $\Graph$ be a compact metric graph, let $\mv\in \Graph$ be arbitrary, and let $\lambda_1 (\Graph,\mv)$ be the first eigenvalue of the Laplacian on $\Graph$ with a Dirichlet condition at $\mv$ and standard conditions elsewhere, as defined by \eqref{eq:dirichlet-one-point}. Then
\begin{displaymath}
    \lambda_1 (\Graph,\mv) \leq \mu_2 (\Graph).
\end{displaymath}
\end{lemma}

\begin{proof}
Denote by $\psi \in H^1(\Graph)$ any eigenfunction associated with $\mu_2(\Graph)$, and denote by $\Graph_+$ and $\Graph_-$ any two (disjoint) nodal domains of $\psi$, i.e., connected subsets of $\{x \in \Graph: \psi (x) \neq 0\}$; then a standard variational argument (cf.\ \cite[Proof of Theorem~3.4]{BerKenKur17}) shows that $\mu_2(\Graph) = \lambda_1 (\Graph_+,\partial\Graph_+) = \lambda_1 (\Graph_-,\partial\Graph_-)$ where $\lambda_1 (\Graph_\pm,\partial\Graph_\pm)$ is the first eigenvalue of $\Graph_\pm$ with Dirichlet conditions at every point in $\partial\Graph_\pm$ (and standard conditions at all interior vertices).

Now necessarily $\mv\in \Graph \setminus \Graph_+$ or $\mv\in \Graph \setminus \Graph_-$, without loss of generality $\Graph \setminus \Graph_+$. Then domain monotonicity for Dirichlet eigenvalues implies that
\begin{displaymath}
    \mu_2 (\Graph) = \lambda_1 (\Graph_+,\partial\Graph_+) \geq \lambda_1 (\Graph,\mv),
\end{displaymath}
as claimed.
\end{proof}

\begin{remark}
\label{rem:no-variation}
We finish with some open-ended comments related to the observation that there is no obvious variational characterisation of $\rho$; it would be very interesting if one could be obtained as this would open up the use of more variational techniques to studying it, and would potentially offer insights into the parallels with the variational quantity $\mu_2$. We note that, as is easy to see, for any given $x \in \Graph$ (with $\Graph$ a compact metric graph), the mean distance from $x$ can be characterised as
\begin{displaymath}
    \rho_\Graph (x) = \frac{1}{|\Graph|}\sup \{ \|f\|_{L^1(\Graph)} : f \in H^1(\Graph),\, f(x)=0,\, |f'|=1 \text{ a.e.}\}.
\end{displaymath}
This principle can be used to rephrase slightly some of the arguments presented above, at least those which involve comparing two graphs $\Graph$ and $\widetilde\Graph$ for which there is some kind of natural pointwise correspondence between them. For example, to prove Theorem~\ref{thm:surgery}(1) one can use that there is a canonical identification $\Graph \to \widetilde\Graph$ such that, under this identification, $C(\Graph) \subset C(\widetilde\Graph)$, with preservation of all function norms, and for almost every $x$, due to the weakened continuity requirement in $\widetilde\Graph$,
\begin{displaymath}
    \{ f \in H^1(\Graph),\, f(x)=0,\, |f'|=1 \text{ a.e.}\} \subset \{ f \in H^1(\widetilde\Graph),\, f(x)=0,\, |f'|=1 \text{ a.e.}\},
\end{displaymath}
whence $\rho_{\Graph} (x) \leq \rho_{\widetilde\Graph} (x)$ for a.e.\ $x$. Integrating over $x$ and using that $|\Graph| = |\widetilde\Graph|$ yields the result. This argument mirrors more closely the corresponding argument for $\mu_2$.
\end{remark}

\bibliographystyle{plain}
%\bibliography{../../../referenzen/literatur.bib} 

\end{document}